\newtheorem{theorem}{Theorem}[section]
\newtheorem{remark}{Remark}[section]
\newtheorem{algorithm}{Algorithm}[section]
\newtheorem{proposition}{Proposition}[section]
\newtheorem{example}{Example}[section]
\begin{document}
\title{Local and Parallel Finite Element Algorithm Based On Multilevel Discretization
for Eigenvalue Problem\footnote{This work was supported in part by National Science Foundations of China
 (NSFC 11001259, 11371026, 11201501, 11031006,
2011CB309703),  the National Center for Mathematics and Interdisciplinary Science,
 CAS and the President Foundation of AMSS-CAS.}}
\author{
Yu Li\footnote{LSEC, ICMSEC,
Academy of Mathematics and Systems Science, Chinese Academy of
Sciences, Beijing 100190, China (liyu@lsec.cc.ac.cn)},\ \
Xiaole Han\footnote{LSEC, ICMSEC,
Academy of Mathematics and Systems Science, Chinese Academy of
Sciences, Beijing 100190, China (hanxiaole@lsec.cc.ac.cn)}, \ \
Hehu Xie\footnote{LSEC, ICMSEC,
Academy of Mathematics and Systems Science, Chinese Academy of
Sciences, Beijing 100190, China (hhxie@lsec.cc.ac.cn)} \ \ and\ \
Chunguang You\footnote{LSEC, ICMSEC,
Academy of Mathematics and Systems Science, Chinese Academy of
Sciences, Beijing 100190, China (youchg@lsec.cc.ac.cn)}
}
\date{}
\maketitle
\begin{abstract}
A local and parallel algorithm based on the multilevel discretization is proposed in this paper to solve the
eigenvalue problem by the finite element method. With this new scheme, solving the eigenvalue problem
in the finest grid is transferred to solutions of the eigenvalue problems on the coarsest mesh and a
series of solutions of boundary value problems by using the local and parallel algorithm.
 The computational work in each processor can reach the optimal order.
Therefore, this type of multilevel local and parallel method improves the overall efficiency of solving
the eigenvalue problem. Some numerical experiments are presented to validate the efficiency
of the new method.

\vskip0.3cm {\bf Keywords.} eigenvalue problem, multigrid, multilevel correction,
local and parallel method, finite element method.

\vskip0.2cm {\bf AMS subject classifications.} 65N30, 65N25, 65L15, 65B99.
\end{abstract}

\section{Introduction}

Solving large scale eigenvalue problems becomes a fundamental problem in modern science and engineering society.
However, it is always a very difficult task to solve high-dimensional eigenvalue problems which come from
physical and chemistry sciences.  Xu and Zhou \cite{XuZhou_Eigen} give a type of two-grid discretization method
to improve the efficiency of the solution of eigenvalue problems. By the two-grid method,
the solution of eigenvalue problem on a fine mesh is reduced to a solution of
eigenvalue problem on a coarse mesh (depends on the fine mesh) and a
solution of the corresponding boundary value problem on the fine mesh \cite{XuZhou_Eigen}.
For more details, please read \cite{Xu_Two_Grid,Xu_Nonlinear}.
Combing the two-grid idea and the local and parallel finite element
technique \cite{XuZhou_FEM},  a type of local and parallel finite element technique
 to solve the eigenvalue problems is given in \cite{XuZhou_Parallel} (also see \cite{DaiShenZhou}).
 Recently, a new type of multilevel correction method for solving eigenvalue problems
  which can be implemented on multilevel grids is proposed in \cite{LinXie}.
   In the multilevel correction scheme, the solution of eigenvalue problem
on a finest mesh can be reduced to a series of solutions of the eigenvalue problem on a very coarse
 mesh (independent of finest mesh) and a series of solutions of the boundary value problems
 on the multilevel meshes.  The multilevel correction method gives a way to
  construct a type of multigrid scheme for the eigenvalue problem \cite{LinXie_MultiGrid_Eigenvalue}.

In this paper, we propose a type of multilevel local and parallel scheme to solve the
eigenvalue problem based on the combination of
the multilevel correction method and the local and parallel technique. An special property of
this scheme is that we can do the local and parallel computing for any times and then
the mesh size of original coarse triangulation is independent of the finest triangulation.
With this new  method, the solution of the eigenvalue problem will not be more difficult
than the solution of the boundary value problems by the local and parallel algorithm
since the main part of the computation in the multilevel local and parallel method
 is solving the boundary value problems.

The standard Galerkin finite element method for eigenvalue problem
has been extensively investigated, e.g. Babu\v{s}ka and Osborn
\cite{Babuska2,BabuskaOsborn}, Chatelin \cite{Chatelin} and
references cited therein. There also exists analysis for the local and parallel
finite element method for the boundary value problems and eigenvalue problems
\cite{DaiShenZhou,SchatzWahlbin,Wahlbin,XuZhou_FEM,XuZhou_Eigen,XuZhou_Parallel}.
 Here we adopt some basic results in these papers for our analysis.
The corresponding error and computational work estimates of the proposed multilevel
local and parallel scheme for the eigenvalue problem will be analyzed. Based
on the analysis, the new method can obtain optimal errors with an optimal
computational work in each processor.

An outline of this paper goes as follows. In the next section,
 a basic theory about the local error estimate of the finite element method is introduced.
In Section 3, we introduce the finite element method for the eigenvalue problem
and the corresponding error estimates.
A local and parallel type of one correction step and multilevel correction algorithm will be given in Section 4.
The estimate of the computational work for the multilevel local and parallel algorithm
is presented in section 5. In Section 6, two numerical examples are presented
 to validate our theoretical analysis and some concluding remarks are given in the last section.

\section{Discretization by finite element method}

In this section, we introduce some notation and error estimates of
the finite element approximation for linear elliptic problem.
The letter $C$ (with or without subscripts) denotes a generic
positive constant which may be different at its different occurrences through the paper.
For convenience, the symbols $\lesssim$, $\gtrsim$ and $\approx$
will be used in this paper. That $x_1\lesssim y_1, x_2\gtrsim y_2$
and $x_3\approx y_3$, mean that $x_1\leq C_1y_1$, $x_2 \geq c_2y_2$
and $c_3x_3\leq y_3\leq C_3x_3$ for some constants $C_1, c_2, c_3$
and $C_3$ that are independent of mesh sizes (see, e.g., \cite{Xu}).
We shall use the standard notation for Sobolev spaces $W^{s,p}(\Omega)$ and their
associated norms and seminorms (see, e.g., \cite{Adams}). For $p=2$, we denote
$H^s(\Omega)=W^{s,2}(\Omega)$ and $H_0^1(\Omega)=\{v\in H^1(\Omega):\ v|_{\partial\Omega}=0\}$,
where $v|_{\partial\Omega}=0$ is in the sense of trace, $\|\cdot\|_{s,\Omega}=\|\cdot\|_{s,2,\Omega}$.

For $G\subset D\subset \Omega$, the notation $G\subset\subset D$ means
 that ${\rm dist}(\partial D\setminus\partial\Omega,\partial G\setminus\partial\Omega)>0$
  (see Figure \ref{fig:DomainDG}).  It is well known that
 any $w\in H_0^1(\Omega_0)$ can be naturally extended to be a function in $H_0^1(\Omega)$ with zero
outside of $\Omega_0$, where $\Omega_0\subset\Omega$.
Thus we will show this fact by the abused notation $H_0^1(\Omega_0)\subset H_0^1(\Omega)$.
\begin{figure}[htb]
\centering
\includegraphics[width=10cm,height=4.5cm]{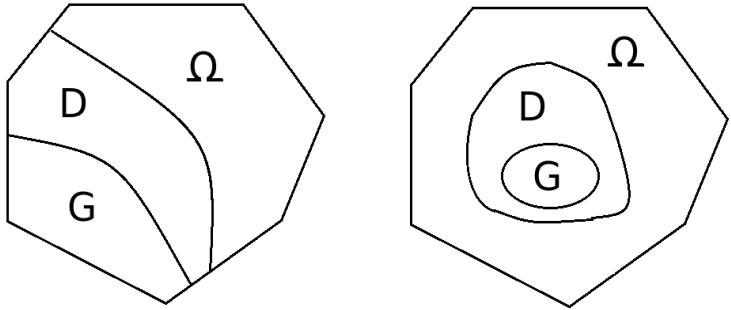}
\caption{$G\subset\subset D\subset\subset\Omega$}\label{fig:DomainDG}
\end{figure}

\subsection{Finite element space}

Now, let us define the finite element space.
First we generate a shape-regular
decomposition $\mathcal{T}_h(\Omega)$ of the computing domain $\Omega\subset \mathcal{R}^d\
(d=2,3)$ into triangles or rectangles for $d=2$ (tetrahedrons or
hexahedrons for $d=3$). The diameter of a cell $K\in\mathcal{T}_h(\Omega)$
is denoted by $h_K$. The mesh size function is denoted by $h(x)$ whose value
is the diameter $h_K$ of the element $K$ including $x$.

For generality, following \cite{XuZhou_FEM,XuZhou_Parallel},
we shall consider a class of finite element spaces that satisfy certain assumptions.
Now we describe such assumptions.

{\bf A.0}.\ There exists $\gamma>1$ such that
\begin{eqnarray*}\label{Mesh_Size_Condition}
h_{\Omega}^{\gamma}\lesssim h(x),\ \ \ \ \forall x\in\Omega,
\end{eqnarray*}
where $h_{\Omega}=\max_{x\in\Omega}h(x)$ is the
largest mesh size of $\mathcal{T}_h(\Omega)$.

Based on the triangulation $\mathcal{T}_h(\Omega)$, we define the finite element space
$V_h(\Omega)$ as follows
\begin{eqnarray*}\label{FES_Pk}
V_h(\Omega)=\big\{v\in C(\bar{\Omega}):\ v|_K\in \mathcal{P}_k,\ \ \forall K\in\mathcal{T}_h(\Omega)\big\},
\end{eqnarray*}
where $\mathcal{P}_k$ denotes the space of polynomials of degree not greater than a positive integer
$k$.  Then we know $V_h(\Omega)\subset H^1(\Omega)$ and define
$V_{0h}(\Omega)=V_h(\Omega)\cap H_0^1(\Omega)$. Given $G\subset \Omega$,
we define $V_h(G)$ and $\mathcal{T}_h(G)$ to be the restriction
of $V_h(\Omega)$ and $\mathcal{T}_h(\Omega)$ to $G$, respectively, and
\begin{eqnarray*}\label{FES_G}
V_{0h}(G)=\big\{v\in V_h(\Omega):\ {\rm supp}v\subset\subset G\big\}.
\end{eqnarray*}
For any $G\subset \Omega$ mentioned in this paper, we assume that it aligns with the partition
$\mathcal{T}_h(\Omega)$.


%
%
%
%

As we know, the finite element space $V_h$ satisfy the following proposition (see, e.g., \cite{BrennerScott,CiarletLions,XuZhou_FEM,XuZhou_Parallel}).
\begin{proposition}\label{Proposition_Fractional_Norm}({\it Fractional Norm})
For any $G\subset \Omega$, we have
\begin{eqnarray}\label{Fractional_Norm}
\inf_{v\in V_{0h}(G)}\|w-v\|_{1,G}\lesssim \|w\|_{1/2,\partial G},\ \ \ \forall w\in V_h(\Omega).
\end{eqnarray}
\end{proposition}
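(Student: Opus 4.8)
The plan is to construct an explicit function $v \in V_{0h}(G)$ that approximates $w$ well in the $H^1(G)$-norm, and to control its norm by the boundary data $\|w\|_{1/2,\partial G}$. The natural candidate is obtained by multiplying $w$ by a cutoff-type modification near $\partial G$; since we are working in the discrete space, the cleanest route is to use the interpolation/extension operators available on the finite element mesh. Specifically, I would introduce a strip $S$ of elements of $\mathcal{T}_h(\Omega)$ adjacent to $\partial G$ (one layer of elements wide), write $\bar G$ for the union of the remaining elements of $\mathcal{T}_h(G)$, and take $v$ to agree with $w$ on $\bar G$ and to be modified on $S$ so that it vanishes on the ``inner'' boundary of the strip lying away from $\partial G$; then $\operatorname{supp} v \subset\subset G$ and $v \in V_{0h}(G)$. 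With this choice, $w - v$ is supported in the strip $S$, so $\|w-v\|_{1,G} = \|w-v\|_{1,S}$.

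Next I would estimate $\|w-v\|_{1,S}$. On the single layer of elements $S$ near $\partial G$, both $w|_S$ and $v|_S$ are determined by their nodal values on $\partial S$, and $v$ is built precisely so that its nodal values on $\partial G$ coincide with those of $w$ while its nodal values on the interior face of $S$ are zero. A scaling (homogeneity) argument on the reference element, combined with the fact that $S$ is a thin shell of width comparable to $h$ near $\partial G$, gives
\begin{eqnarray*}
\|w-v\|_{1,S}^2 \lesssim \sum_{K \subset S} \Big( h_K^{-1}\|w\|_{0,\partial K \cap \partial G}^2 + h_K |w|_{1,\partial K \cap \partial G}^2 \Big),
\end{eqnarray*}
i.e. a discrete trace-type seminorm of $w$ on $\partial G$. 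Summing over the elements of the strip and recognizing the right-hand side as a discrete version of the fractional Sobolev norm $\|w\|_{1/2,\partial G}^2$ (using the mesh-regularity assumption and, if needed, the mesh-size condition {\bf A.0} to relate local and global mesh quantities) yields $\|w-v\|_{1,G} \lesssim \|w\|_{1/2,\partial G}$, which is the claim after taking the infimum over $V_{0h}(G)$.

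The main obstacle I anticipate is making rigorous the identification of the elementwise discrete trace quantities with the genuine fractional norm $\|w\|_{1/2,\partial G}$; the $H^{1/2}$-norm is nonlocal, so one cannot simply bound it element by element without an inverse-type estimate or an interpolation-space argument. The clean way around this is to invoke the standard trace theorem $\|w\|_{1/2,\partial G} \approx \inf\{\|W\|_{1,G} : W \in H^1(G),\ W|_{\partial G} = w|_{\partial G}\}$ for the continuous lifting, then compare the discrete modification $v$ with the finite element interpolant of a near-optimal continuous extension and use standard interpolation error bounds together with an inverse inequality on the boundary strip. Alternatively, if the paper is content to cite \cite{XuZhou_FEM,XuZhou_Parallel,BrennerScott,CiarletLions}, the whole estimate follows from the trace and extension results established there, and the argument reduces to the bookkeeping of the support condition $\operatorname{supp} v \subset\subset G$ sketched above; I would present the short proof in that form and refer to those sources for the underlying trace estimate.
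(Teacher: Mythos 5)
The paper itself offers no proof of this proposition: it is quoted as a known property of the finite element space, with the reader referred to \cite{BrennerScott,CiarletLions,XuZhou_FEM,XuZhou_Parallel}. So your closing alternative (cite the trace/extension results of those references and only keep the support bookkeeping) is precisely what the paper does, and that part of your proposal is unobjectionable. The problem lies in the constructive argument you present as the main route. If you build $v$ by keeping the nodal values of $w$ away from $\partial G$ and zeroing them in a one-element strip $S$, then $w-v$ on $S$ genuinely carries the nodal values of $w$ near $\partial G$, and the best elementwise bound you can get is exactly the quantity you wrote, $\sum_{K\subset S}\big(h_K^{-1}\|w\|_{0,\partial K\cap\partial G}^2+h_K|w|_{1,\partial K\cap\partial G}^2\big)$. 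This scaled trace quantity is \emph{not} controlled by $\|w\|_{1/2,\partial G}^2$: already for $w\equiv 1$ in a neighborhood of $\partial G$ it behaves like $h^{-1}$, while $\|w\|_{1/2,\partial G}$ remains bounded. The inequality you would need, $h^{-1}\|w\|_{0,\partial G}^2\lesssim \|w\|_{1/2,\partial G}^2$, runs in the direction opposite to the inverse estimate $\|w\|_{1/2,\partial G}\lesssim h^{-1/2}\|w\|_{0,\partial G}$, so no interpolation-space or inverse-inequality refinement can close this step. The defect is in the choice of $v$ (brute-force truncation of $w$ near $\partial G$), not in the identification of discrete and continuous trace norms.

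A correct construction must route the boundary data through an $H^{1/2}$-bounded lifting rather than through pointwise truncation. Take $W\in H^1(G)$ with $W|_{\partial G}=w|_{\partial G}$ and $\|W\|_{1,G}\lesssim\|w\|_{1/2,\partial G}$ (inverse trace theorem), or equivalently the discrete harmonic extension of $w|_{\partial G}$, and set $v=w-I_hW$ where $I_h$ is a boundary-value-preserving, $H^1$-stable quasi-interpolant (Scott--Zhang type); since $w|_{\partial G}$ is already piecewise polynomial, $I_hW$ matches it on $\partial G$, $v$ vanishes on $\partial G$, and $\|w-v\|_{1,G}=\|I_hW\|_{1,G}\lesssim\|W\|_{1,G}\lesssim\|w\|_{1/2,\partial G}$. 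The only place where a strip modification legitimately enters is in upgrading ``$v=0$ on $\partial G$'' to the support condition $\operatorname{supp}v\subset\subset G$, and it must then be applied to the already-small function $w-I_hW$ (using superapproximation/scaling for a function vanishing on $\partial G$), not to $w$ itself. This lifting-based argument is the one underlying the statements in \cite{XuZhou_FEM,XuZhou_Parallel} that the paper invokes; your sketch as written would not yield the claimed bound.
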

%

\subsection{A linear elliptic problem}
In this subsection, we repeat some basic properties of a second order elliptic boundary
value problem and its finite element discretization, which will be used in this paper.
The following results is presented in \cite{SchatzWahlbin,Wahlbin,XuZhou_FEM,XuZhou_Parallel}.

We consider the homogeneous boundary value problem
\begin{equation}\label{Liner_Elliptic}
\left\{
\begin{array}{rcl}
Lu&=&f,\ \ {\rm in}\ \Omega,\\
u&=&0,\ \ {\rm on}\ \partial\Omega.
\end{array}
\right.
\end{equation}
Here the linear second order elliptic operator $L:H_0^{1}(\Omega)\rightarrow H^{-1}(\Omega)$ is define as
\begin{eqnarray*}
Lu=-{\rm div}(A\nabla u),
\end{eqnarray*}
where $A=(a_{ij})_{1\leq i,j\leq d}\in \mathcal{R}^{d\times d}$ is uniformly
positive definite symmetric on $\Omega$ with $a_{ij}\in W^{1,\infty}(\Omega)$.
The weak form for (\ref{Liner_Elliptic}) is as follows:

Find $u\equiv  L^{-1}f\in H_0^1(\Omega)$ such that
\begin{eqnarray}\label{Weak_Linear_Elliptic}
a(u, v) = (f, v), \ \ \ \forall v\in H_0^1(\Omega),
\end{eqnarray}
where $(\cdot,\cdot)$ is the standard inner-product of $L^2(\Omega)$ and
\begin{eqnarray*}
a(u,v)=\big(A\nabla u, \nabla v\big).
\end{eqnarray*}
As we know
\begin{eqnarray*}
\|w\|_{1,\Omega}^2\lesssim a(w,w),\ \ \ \forall w\in H_0^1(\Omega).
\end{eqnarray*}
We assume (c.f. \cite{Grisvard}) that the following regularity estimate holds for
the solution of (\ref{Liner_Elliptic}) or (\ref{Weak_Linear_Elliptic})
\begin{eqnarray*}\label{Regularity_Estimate}
\|u\|_{1+\alpha,\Omega}\lesssim \|f\|_{-1+\alpha,\Omega}
\end{eqnarray*}
for some $\alpha\in (0,1]$ depending on $\Omega$ and the coefficient of $L$.

For some $G\subset \Omega$, we need the following regularity assumption

{\bf R(G)}.\ For any $f\in L^2(G)$, there exists a $w\in H_0^1(G)$ satisfying
\begin{eqnarray*}
a(v,w)=(f,v),\ \ \ \forall v\in H_0^1(G)
\end{eqnarray*}
and
\begin{eqnarray*}
\|u\|_{1+\alpha,G}\lesssim \|f\|_{-1+\alpha,G}.
\end{eqnarray*}

For the analysis, we define the Galerkin-Projection operator $P_h:\ H_0^1(\Omega)\rightarrow V_{0h}(\Omega)$ by
\begin{eqnarray}\label{Projection_Problem}
a(u-P_hu,v)=0,\ \ \ \ \forall v\in V_{0h}(\Omega)
\end{eqnarray}
and apparently
\begin{eqnarray}\label{Projection_Inequality}
\|P_hu\|_{1,\Omega}\lesssim \|u\|_{1,\Omega},\ \ \ \forall u\in H_0^1(\Omega).
\end{eqnarray}
Based on (\ref{Projection_Inequality}), the global priori error estimate
 can be obtained from the approximate properties of the finite dimensional subspace $V_{0h}(\Omega)$
 (cf. \cite{BrennerScott,CiarletLions}). For the following analysis, we introduce the following quantity:
\begin{eqnarray}
\rho_{\Omega}(h)&=&\sup_{f\in L^2(\Omega),\|f\|_{0,\Omega}=1}\inf_{v\in V_{0h}(\Omega)}\|L^{-1}f-v\|_{1,\Omega}.
\end{eqnarray}

Similarly, we can also define  $\rho_G(h)$ if Assumption R(G) holds.

The following results can be found in \cite{BabuskaOsborn,BrennerScott,CiarletLions,XuZhou_Eigen,XuZhou_Parallel}.
\begin{proposition}
\begin{eqnarray*}
\|(I-P_h)L^{-1}f\|_{1,\Omega}&\lesssim&\rho_{\Omega}(h)\|f\|_{0,\Omega},\ \ \ \forall f\in L^2(\Omega),\\
\|u-P_hu\|_{0,\Omega}&\lesssim&\rho_{\Omega}(h)\|u-P_hu\|_{1,\Omega},\ \ \ \forall u\in H_0^1(\Omega).
\end{eqnarray*}
\end{proposition}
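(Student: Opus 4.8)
My plan is to treat the two estimates separately, the first by a duality‑free (Céa‑type) argument combined with the definition of $\rho_\Omega(h)$, and the second by the classical Aubin–Nitsche (Nitsche) duality trick.

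\textbf{First estimate.} I would start from the orthogonality relation \eqref{Projection_Problem}: for $u=L^{-1}f$ and any $v\in V_{0h}(\Omega)$,
\begin{eqnarray*}
a\big((I-P_h)u,(I-P_h)u\big)=a\big((I-P_h)u,u-v\big),
\end{eqnarray*}
since $P_hu-v\in V_{0h}(\Omega)$. Using the coercivity $\|w\|_{1,\Omega}^2\lesssim a(w,w)$ and the boundedness of $a(\cdot,\cdot)$ on $H_0^1(\Omega)$, this yields $\|(I-P_h)u\|_{1,\Omega}\lesssim \inf_{v\in V_{0h}(\Omega)}\|u-v\|_{1,\Omega}$. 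Dividing the target inequality into the homogeneous case by scaling $f$, i.e. writing $f=\|f\|_{0,\Omega}\,\tilde f$ with $\|\tilde f\|_{0,\Omega}=1$, the infimum is exactly $\rho_\Omega(h)$ by its definition, so $\|(I-P_h)L^{-1}f\|_{1,\Omega}\lesssim\rho_\Omega(h)\|f\|_{0,\Omega}$. This step is routine.

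\textbf{Second estimate.} Let $e=u-P_hu\in H_0^1(\Omega)$. I would introduce the auxiliary dual problem: find $w\in H_0^1(\Omega)$ with $a(v,w)=(e,v)$ for all $v\in H_0^1(\Omega)$, i.e. $w=L^{-1}e$ (using symmetry of $A$). Then $\|e\|_{0,\Omega}^2=(e,e)=a(e,w)=a(e,w-P_hw)$ by \eqref{Projection_Problem}, since $P_hw\in V_{0h}(\Omega)$. Bounding $a(\cdot,\cdot)$ and applying the first estimate to $w-P_hw=(I-P_h)L^{-1}e$ gives $\|e\|_{0,\Omega}^2\lesssim\|e\|_{1,\Omega}\,\rho_\Omega(h)\|e\|_{0,\Omega}$, and dividing by $\|e\|_{0,\Omega}$ yields $\|u-P_hu\|_{0,\Omega}\lesssim\rho_\Omega(h)\|u-P_hu\|_{1,\Omega}$.

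\textbf{Main obstacle.} There is no deep obstacle here — the statement is classical and the only thing to be careful about is the bookkeeping connecting the abstract quantity $\rho_\Omega(h)$ to the approximation infimum, and making sure the dual problem's right-hand side $e\in L^2(\Omega)$ so that $L^{-1}e$ is well defined and the first estimate applies to it. One should also note that the symmetry of $A$ is what lets the adjoint problem be governed by the same operator $L$, so the same $\rho_\Omega(h)$ appears; if $A$ were nonsymmetric one would need the corresponding quantity for $L^*$. I expect the write-up to be short.
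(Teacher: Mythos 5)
Your proof is correct and is essentially the standard argument behind this proposition, which the paper does not prove itself but simply cites from the literature (Babu\v{s}ka--Osborn, Brenner--Scott, etc.): C\'ea's lemma plus the definition of $\rho_\Omega(h)$ for the $H^1$ bound, and the Aubin--Nitsche duality argument, using the Galerkin orthogonality \eqref{Projection_Problem} and the symmetry of $A$, for the $L^2$ bound. No gaps; the only trivial remark is that the final division by $\|u-P_hu\|_{0,\Omega}$ needs the (harmless) convention that the inequality is immediate when this norm vanishes.
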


Now, we state an important and useful result about the local error estimates \cite{SchatzWahlbin,Wahlbin,XuZhou_Parallel}
which will be used in the following.
\begin{proposition}\label{Prop:Local_Estimate}
Suppose that $f\in H^{-1}(\Omega)$ and $G\subset\subset \Omega_0\subset\Omega$. If Assumptions
A.0 holds and $w\in V_{h}(\Omega_0)$ satisfies
\begin{eqnarray*}
a(w,v)=(f,v),\ \ \ \ \forall v\in V_{0h}(\Omega_0).
\end{eqnarray*}
Then we have the following estimate
\begin{eqnarray*}\label{Local_Estimate}
\|w\|_{1,G}\lesssim \|w\|_{0,\Omega_0}+\|f\|_{-1,\Omega_0}.
\end{eqnarray*}
\end{proposition}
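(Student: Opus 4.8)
The plan is to reduce the claimed estimate to a Caccioppoli-type argument combined with the fractional-norm approximation property (Proposition \ref{Proposition_Fractional_Norm}) and a duality/interior-regularity bootstrap. First I would introduce an intermediate domain $D$ with $G\subset\subset D\subset\subset\Omega_0$ and a smooth cut-off function $\omega\in C^\infty(\bar\Omega)$ with $\omega\equiv 1$ on $G$, $\operatorname{supp}\omega\subset\subset D$, $0\le\omega\le 1$. The strategy is to test the discrete equation $a(w,v)=(f,v)$ against a finite element function $v$ that is close to $\omega^2 w$ (its projection onto $V_{0h}(D)$), exploiting the localization. Writing $\|w\|_{1,G}^2\lesssim a(\omega w,\omega w)$ (using uniform ellipticity, noting $\omega w\in H_0^1(\Omega)$ and $\omega\equiv1$ on $G$), one expands $a(\omega w,\omega w)=a(w,\omega^2 w)+(\text{commutator terms involving }\nabla\omega)$. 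The commutator terms are controlled by $\|w\|_{0,D}\|w\|_{1,D}$ by Cauchy--Schwarz, since $\nabla\omega$ is bounded and supported in $D$; a small parameter $\varepsilon$ absorbs the $\|w\|_{1,D}$ factor into the left-hand side after the argument is iterated over a chain of nested domains.

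The main term $a(w,\omega^2 w)$ cannot be handled directly because $\omega^2 w\notin V_{0h}$. Here I would set $v_h=P_h^D(\omega^2 w)\in V_{0h}(D)$, the Galerkin projection (or Scott--Zhang-type interpolant) of $\omega^2 w$ onto $V_{0h}(D)\subset V_{0h}(\Omega_0)$, so that $a(w,v_h)=(f,v_h)$ by the hypothesis on $w$. Then $a(w,\omega^2 w)=(f,v_h)+a(w,\omega^2 w-v_h)$, and $|(f,v_h)|\le\|f\|_{-1,\Omega_0}\|v_h\|_{1,\Omega_0}\lesssim\|f\|_{-1,\Omega_0}\|\omega^2 w\|_{1,D}\lesssim\|f\|_{-1,\Omega_0}\|w\|_{1,D}$. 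The delicate piece is $a(w,\omega^2 w-v_h)$: one needs that $\omega^2 w-v_h$ is small in $H^1$. Since $\omega^2 w$ vanishes near $\partial D$ and $w$ is a finite element function, the approximation error $\|\omega^2 w-v_h\|_{1,D}$ is governed by how well $V_{0h}(D)$ approximates $\omega^2 w$; using the fractional-norm property and the fact that $\omega^2 w$ has a trace that is small on interior slices, together with A.0 to control the mesh grading, one obtains $\|\omega^2 w - v_h\|_{1,D}\lesssim\|w\|_{0,D}$ up to lower-order terms — this is exactly the step the cited references \cite{SchatzWahlbin,Wahlbin} establish via their superapproximation lemma.

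The hardest part, and the technical heart, is the superapproximation estimate for $\omega^2 w - v_h$, i.e. showing that multiplying a discrete function by a smooth cut-off and projecting back loses only a factor comparable to $\|w\|_{0,D}$ rather than $\|w\|_{1,D}$. This requires element-wise estimates on $(\omega^2 w - I_h(\omega^2 w))$ where $I_h$ is a local interpolant, Taylor-expanding $\omega^2$ on each element; the gradient of the error picks up one power of $h$ times $\|w\|_{1,K}$ locally, which after summation and an inverse inequality converts to $\|w\|_{0,D}$. I would then iterate the whole inequality over a finite chain $G=G_0\subset\subset G_1\subset\subset\cdots\subset\subset G_m\subset\subset\Omega_0$, at each stage trading a $\|w\|_{1,G_{j+1}}$ on the right for $\|w\|_{1,G_j}$ on the left with a shrinking constant, so that after finitely many steps the $\|w\|_{1,\cdot}$ term on the right is absorbed, leaving only $\|w\|_{0,\Omega_0}+\|f\|_{-1,\Omega_0}$. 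Finally I would remark that since this is a known result, one may alternatively just cite \cite{SchatzWahlbin,Wahlbin,XuZhou_Parallel} and sketch only the cut-off/iteration skeleton.
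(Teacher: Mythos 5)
The paper itself gives no proof of Proposition \ref{Prop:Local_Estimate}: it is quoted as a known local estimate with a pointer to \cite{SchatzWahlbin,Wahlbin,XuZhou_Parallel}, so your closing remark (cite and sketch) is exactly what the authors do. Judged as a proof, however, your sketch has a genuine gap in the closing mechanism. The terms that can be absorbed by a small parameter are only those of the form $\varepsilon\|\omega w\|_{1,\Omega}^2$, i.e.\ involving the \emph{same} function that sits on the left-hand side through $\|w\|_{1,G}^2\le\|\omega w\|_{1,\Omega}^2\lesssim a(\omega w,\omega w)$. What your argument produces instead is $\varepsilon\|w\|_{1,D}^2$ on the strictly larger domain $D$: converting the superapproximation error immediately into $\|w\|_{0,D}$ by the local inverse inequality leaves you with $a(w,\omega^2w-v_h)\lesssim\|w\|_{1,D}\|w\|_{0,D}$, and Young's inequality then yields $\varepsilon\|w\|_{1,D}^2$, which cannot be absorbed into $\|w\|_{1,G}^2$. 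Iterating over a chain $G=G_0\subset\subset\cdots\subset\subset G_m\subset\subset\Omega_0$ with a fixed $\varepsilon$ does not repair this: after $m$ steps you are left with $\varepsilon^m\|w\|_{1,\Omega_0}^2$, and since the mesh is only assumed to satisfy A.0 (grading exponent $\gamma>1$, not quasi-uniformity), the inverse estimate costs $\|w\|_{1,\Omega_0}\lesssim h_\Omega^{-\gamma}\|w\|_{0,\Omega_0}$; a fixed constant $\varepsilon^m$ cannot beat $h_\Omega^{-2\gamma}\to\infty$. So the $\|w\|_{1,\cdot}$ term on the right is never ``absorbed after finitely many steps'' as you claim, and A.0, which you mention only in passing, never actually enters your argument.

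The argument in \cite{SchatzWahlbin,Wahlbin,XuZhou_FEM,XuZhou_Parallel} closes differently: keep the mesh-size factor from superapproximation, $\|\omega^2w-v_h\|_{1,D}\lesssim h_\Omega\|w\|_{1,D}$, so that $a(w,\omega^2w-v_h)\lesssim h_\Omega\|w\|_{1,D}^2$, while the commutator and the $(f,v_h)$ term are estimated through $\|\omega w\|_{1,\Omega}$ and $\|w\|_{0,D}$ and hence are genuinely absorbable (note the commutator is even harmless: $a(\omega w,\omega w)=a(w,\omega^2w)+\int_D A\,w\nabla\omega\cdot w\nabla\omega\,dx\lesssim a(w,\omega^2w)+\|w\|_{0,D}^2$, and no duality step is needed for this $H^1$ estimate). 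One pass then gives $\|w\|_{1,G}^2\lesssim h_\Omega\|w\|_{1,D}^2+\|w\|_{0,\Omega_0}^2+\|f\|_{-1,\Omega_0}^2$, i.e.\ a gain of one power of $h_\Omega$ per nested annulus; after $t$ passes the leftover $h_\Omega^{t}\|w\|_{1,\Omega_0}^2\lesssim h_\Omega^{t-2\gamma}\|w\|_{0,\Omega_0}^2$ is bounded once $t\ge 2\gamma$, which is precisely where Assumption A.0 is used quantitatively. Your skeleton (cut-off, superapproximation, nested-domain iteration) is the right one, but as written the bookkeeping of where the $h$-factor goes and what may be $\varepsilon$-absorbed would not produce the stated estimate.
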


\section{Error estimates for eigenvalue problems}
In this section, we introduce the concerned eigenvalue problem and the corresponding
finite element discretization.

In this paper, we consider the following eigenvalue problem:

Find $(\lambda, u )\in \mathcal{R}\times H^1_0(\Omega)$ such that
$b(u,u)=1$ and
\begin{eqnarray}\label{Weak_Eigenvalue_Problem}
  a(u,v)&=&\lambda b(u,v),\quad \forall v\in H^1_0(\Omega),
\end{eqnarray}
where
\begin{eqnarray*}
b(u,u)=(u,u).
\end{eqnarray*}

For the eigenvalue $\lambda$, there exists the following Rayleigh
quotient expression (see, e.g., \cite{Babuska2,BabuskaOsborn,XuZhou_Eigen})
\begin{eqnarray*}\label{Rayleigh_Quotient}
\lambda=\frac{a(u,u)}{b(u,u)}.
\end{eqnarray*}
From \cite{BabuskaOsborn,Chatelin}, we know the eigenvalue problem
(\ref{Weak_Eigenvalue_Problem}) has an eigenvalue sequence $\{\lambda_j \}:$
$$0<\lambda_1\leq \lambda_2\leq\cdots\leq\lambda_k\leq\cdots,\ \ \
\lim_{k\rightarrow\infty}\lambda_k=\infty,$$ and the associated
eigenfunctions
$$u_1,u_2,\cdots,u_k,\cdots,$$
where $b(u_i,u_j)=\delta_{ij}$. In the sequence $\{\lambda_j\}$, the
$\lambda_j$ are repeated according to their multiplicity.

Then we can define the discrete approximation for the exact eigenpair $(\lambda,u)$ of
(\ref{Weak_Eigenvalue_Problem}) based on the finite element space as:

Find $(\lambda_h, u_h)\in \mathcal{R}\times V_{0h}(\Omega)$ such that
$b(u_h,u_h)=1$ and
\begin{eqnarray}\label{Discrete_Weak_Eigen_Problem}
a(u_h,v_h)&=&\lambda_hb(u_h,v_h),\quad \forall v_h\in V_{0h}(\Omega).
\end{eqnarray}

From (\ref{Discrete_Weak_Eigen_Problem}), we know the following
Rayleigh quotient expression for $\lambda_h$ holds
(see, e.g., \cite{Babuska2,BabuskaOsborn,XuZhou_Eigen})
\begin{eqnarray*}\label{Discrete_Rayleigh_Quotient}
\lambda_h &=&\frac{a(u_h,u_h)}{b(u_h,u_h)}.
\end{eqnarray*}
Similarly, we know from \cite{BabuskaOsborn,Chatelin} the eigenvalue
problem (\ref{Discrete_Weak_Eigen_Problem}) has eigenvalues
$$0<\lambda_{1,h}\leq \lambda_{2,h}\leq\cdots\leq \lambda_{k,h}\leq\cdots\leq \lambda_{N_h,h},$$
and the corresponding eigenfunctions
$$u_{1,h}, u_{2,h},\cdots, u_{k,h}, \cdots, u_{N_h,h},$$
where $b(u_{i,h},u_{j,h})=\delta_{ij}, 1\leq i,j\leq N_h$ ($N_h$ is
the dimension of the finite element space $V_{0h}(\Omega)$).

From the minimum-maximum principle (see, e.g., \cite{Babuska2,BabuskaOsborn}),
the following upper bound result holds
$$\lambda_i\leq \lambda_{i,h}, \ \ \ i=1,2,\cdots, N_h.$$

Let $M(\lambda_i)$ denote the eigenspace corresponding to the
eigenvalue $\lambda_i$ which is defined by
\begin{eqnarray}
M(\lambda_i)&=&\big\{w\in V: w\ {\rm is\ an\ eigenvalue\ of\
(\ref{Weak_Eigenvalue_Problem})\ corresponding\ to} \ \lambda_i\nonumber\\
&&\ \ \ {\rm and}\ \|w\|_b=1\big\},
\end{eqnarray}
where $\|w\|_b = \sqrt{b(w,w)}$.
Then we define
\begin{eqnarray}
\delta_h(\lambda_i)=\sup_{w\in M(\lambda_i)}\inf_{v\in
V_{0h}(\Omega)}\|w-v\|_1.
\end{eqnarray}

For the eigenpair approximations by the finite element method, there
exist the following error estimates.
\begin{proposition}(\cite[Lemma 3.7, (3.28b,3.29b)]{Babuska2}, \cite[P. 699]{BabuskaOsborn} and
\cite{Chatelin})\label{Prop:Eigen_Error_Estimate}

\noindent(i) For any eigenfunction approximation $u_{i,h}$ of
(\ref{Discrete_Weak_Eigen_Problem}) $(i = 1, 2, \cdots, N_h)$, there is an
eigenfunction $u_i$ of (\ref{Weak_Eigenvalue_Problem}) corresponding to
$\lambda_i$ such that $\|u_i\|_b = 1$ and
\begin{eqnarray*}\label{Eigenfunction_Error}
\|u_i-u_{i,h}\|_{1,\Omega}&\leq& C_i\delta_h(\lambda_i).
\end{eqnarray*}
Furthermore,
\begin{eqnarray*}\label{Eigenfunction_Error_Negative}
\|u_i- u_{i,h}\|_{0,\Omega} &\leq& C_i\rho_{\Omega}(h)\delta_h(\lambda_i).
\end{eqnarray*}
(ii) For each eigenvalue, we have
\begin{eqnarray*}
\lambda_i \leq \lambda_{i,h}\leq \lambda_i + C_i\delta_h^2(\lambda_i).
\end{eqnarray*}
 Here and hereafter $C_i$ is some constant depending on $i$ but independent of  the mesh size $h$.
\end{proposition}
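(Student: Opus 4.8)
The plan is to reduce the statement to the abstract theory of spectral approximation for compact self-adjoint operators (as in Babu\v{s}ka--Osborn and Chatelin), feeding in the finite element error estimates already recorded in Section 2. First I would introduce the solution operator $T:L^2(\Omega)\to H_0^1(\Omega)$ defined by $a(Tf,v)=b(f,v)$ for all $v\in H_0^1(\Omega)$, together with its discrete counterpart $T_h:L^2(\Omega)\to V_{0h}(\Omega)$ given by $a(T_hf,v_h)=b(f,v_h)$ for all $v_h\in V_{0h}(\Omega)$. Both are compact and self-adjoint with respect to $a(\cdot,\cdot)$ (and to $b(\cdot,\cdot)$), and from the definition (\ref{Projection_Problem}) of the Galerkin projection one has $T_h=P_hT$. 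A pair $(\lambda,u)$ solves (\ref{Weak_Eigenvalue_Problem}) with $b(u,u)=1$ if and only if $Tu=\lambda^{-1}u$, and likewise $(\lambda_h,u_h)$ solves (\ref{Discrete_Weak_Eigen_Problem}) if and only if $T_hu_h=\lambda_h^{-1}u_h$.

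The finite element input is the following. For $w\in M(\lambda_i)$ one has $Tw=\lambda_i^{-1}w$, so since $P_h$ is the $a$-projection,
\begin{eqnarray*}
\|(T-T_h)w\|_{1,\Omega}=\|(I-P_h)(\lambda_i^{-1}w)\|_{1,\Omega}\lesssim \lambda_i^{-1}\inf_{v\in V_{0h}(\Omega)}\|w-v\|_{1,\Omega}\lesssim \delta_h(\lambda_i),
\end{eqnarray*}
and combining this with the duality bound $\|(I-P_h)z\|_{0,\Omega}\lesssim \rho_\Omega(h)\|(I-P_h)z\|_{1,\Omega}$ of Proposition~2.2 gives $\|(T-T_h)w\|_{0,\Omega}\lesssim \rho_\Omega(h)\delta_h(\lambda_i)$. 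The same two estimates, now for arbitrary $f\in L^2(\Omega)$, yield $\|(T-T_h)f\|_{0,\Omega}\lesssim \rho_\Omega(h)^2\|f\|_{0,\Omega}$, so $T_h\to T$ in $\mathcal{L}(L^2(\Omega),L^2(\Omega))$.

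With the norm convergence $T_h\to T$ in hand, I would invoke the abstract spectral approximation theorem (Osborn; \cite[P.~699]{BabuskaOsborn}, \cite{Chatelin}): for $h$ small each $\lambda_i^{-1}$ is approximated by exactly $\dim M(\lambda_i)$ discrete eigenvalues $\lambda_{j,h}^{-1}$ (with multiplicity), the discrete spectral projections converge to the continuous ones in norm, and hence to each $u_{i,h}$ there corresponds an eigenfunction $u_i\in M(\lambda_i)$ with $\|u_i\|_b=1$ and $\|u_i-u_{i,h}\|_{1,\Omega}\lesssim \sup_{w\in M(\lambda_i)}\|(T-T_h)w\|_{1,\Omega}\lesssim \delta_h(\lambda_i)$; the analogous $L^2$ bound above then gives $\|u_i-u_{i,h}\|_{0,\Omega}\lesssim \rho_\Omega(h)\delta_h(\lambda_i)$. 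This is part~(i). For part~(ii), the lower bound $\lambda_i\le\lambda_{i,h}$ is exactly the min--max statement already recorded before the proposition. For the upper bound I would use the Rayleigh quotient identity: for any $0\ne w\in H_0^1(\Omega)$ and eigenpair $(\lambda_i,u_i)$,
\begin{eqnarray*}
\frac{a(w,w)}{b(w,w)}-\lambda_i=\frac{a(w-u_i,w-u_i)-\lambda_i b(w-u_i,w-u_i)}{b(w,w)},
\end{eqnarray*}
which follows by expanding both bilinear forms and using $a(u_i,w-u_i)=\lambda_i b(u_i,w-u_i)$. Taking $w=u_{i,h}$, using $b(u_{i,h},u_{i,h})=1$, the $u_i$ produced in part~(i), the coercivity $\|v\|_{1,\Omega}^2\lesssim a(v,v)$, and the fact that $b(u_{i,h}-u_i,u_{i,h}-u_i)\lesssim \rho_\Omega^2(h)\delta_h^2(\lambda_i)$ is of higher order, one gets $0\le\lambda_{i,h}-\lambda_i\lesssim \|u_i-u_{i,h}\|_{1,\Omega}^2\lesssim \delta_h^2(\lambda_i)$.

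The main obstacle is the abstract spectral approximation step, not the estimates themselves: once the norm convergence $T_h\to T$ is established, one still needs the perturbation/gap arguments showing that the discrete eigenvalues cluster with the correct total multiplicity around each $\lambda_i^{-1}$ and that the spectral projections converge, so that $u_{i,h}$ can legitimately be matched with a true eigenfunction $u_i$ inside the (possibly multi-dimensional) eigenspace $M(\lambda_i)$ — the bookkeeping for repeated eigenvalues is the delicate part, and it is precisely what the cited results of Babu\v{s}ka, Babu\v{s}ka--Osborn and Chatelin supply. Everything else reduces to the short Rayleigh-quotient computation above.
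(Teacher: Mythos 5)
The paper itself gives no proof of this proposition -- it is quoted directly from \cite{Babuska2}, \cite{BabuskaOsborn} and \cite{Chatelin} -- and your outline (solution operators $T$ and $T_h=P_hT$, the finite element estimates of Section~2 giving norm convergence, the abstract spectral approximation theorems for the eigenpair bounds, and the Rayleigh-quotient identity, which is exactly Proposition~\ref{Prop:Rayleigh_Quotient_Error}, for the eigenvalue bound) is precisely the standard argument those references supply. So your proposal is correct and follows the same (cited) route as the paper; the step you explicitly defer -- the multiplicity and spectral-projection bookkeeping that matches $u_{i,h}$ with a genuine $u_i\in M(\lambda_i)$, including the same $u_i$ for both the $H^1$ and $L^2$ bounds -- is exactly what the cited results provide.
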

To analyze our method, we introduce the error expansion of
eigenvalue by the Rayleigh quotient formula which comes from
\cite{Babuska2,BabuskaOsborn,XuZhou_Eigen}.
\begin{proposition}\label{Prop:Rayleigh_Quotient_Error}
Assume $(\lambda,u)$ is the true solution of the eigenvalue problem
(\ref{Weak_Eigenvalue_Problem}) and  $0\neq \psi\in H_0^1(\Omega)$. Let us define
\begin{eqnarray*}
\widehat{\lambda}=\frac{a(\psi,\psi)}{b(\psi,\psi)}.
\end{eqnarray*}
Then we have
\begin{eqnarray*}
\widehat{\lambda}-\lambda
&=&\frac{a(u-\psi,u-\psi)}{b(\psi,\psi)}-\lambda
\frac{b(u-\psi,u-\psi)}{b(\psi,\psi)}.
\end{eqnarray*}
\end{proposition}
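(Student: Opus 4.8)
The plan is to compute $\widehat{\lambda}-\lambda$ directly by expanding $a(\psi,\psi)$ and $b(\psi,\psi)$ around the exact eigenpair $(\lambda,u)$, writing $\psi = u - (u-\psi)$ and using bilinearity and symmetry of both forms. First I would write
\begin{eqnarray*}
a(\psi,\psi) &=& a\big(u-(u-\psi),\,u-(u-\psi)\big)\\
&=& a(u,u) - 2a(u,u-\psi) + a(u-\psi,u-\psi),
\end{eqnarray*}
and similarly
\begin{eqnarray*}
b(\psi,\psi) &=& b(u,u) - 2b(u,u-\psi) + b(u-\psi,u-\psi).
\end{eqnarray*}

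The key algebraic identity I would exploit is the variational characterization (\ref{Weak_Eigenvalue_Problem}): since $u-\psi\in H_0^1(\Omega)$, we have $a(u,u-\psi)=\lambda\, b(u,u-\psi)$. Hence the cross terms satisfy $a(u,u-\psi) - \lambda\, b(u,u-\psi) = 0$. Now form $a(\psi,\psi) - \lambda\, b(\psi,\psi)$ using the two expansions above: the zeroth-order terms give $a(u,u)-\lambda\, b(u,u)=0$ (again by (\ref{Weak_Eigenvalue_Problem}) with $v=u$), the first-order (cross) terms cancel by the identity just noted, and only the quadratic remainder survives, yielding
\begin{eqnarray*}
a(\psi,\psi) - \lambda\, b(\psi,\psi) = a(u-\psi,u-\psi) - \lambda\, b(u-\psi,u-\psi).
\end{eqnarray*}

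Finally I would divide through by $b(\psi,\psi)$, which is nonzero because $\psi\neq 0$, to obtain
\begin{eqnarray*}
\widehat{\lambda}-\lambda = \frac{a(\psi,\psi)}{b(\psi,\psi)} - \lambda
= \frac{a(u-\psi,u-\psi)}{b(\psi,\psi)} - \lambda\,\frac{b(u-\psi,u-\psi)}{b(\psi,\psi)},
\end{eqnarray*}
which is exactly the claimed formula. There is no real obstacle here: the only points requiring care are keeping track of signs in the bilinear expansion and invoking (\ref{Weak_Eigenvalue_Problem}) twice — once with the test function $u-\psi$ to kill the cross terms and once with $u$ itself (or using $b(u,u)=1$ together with the Rayleigh quotient $\lambda = a(u,u)/b(u,u)$) to kill the zeroth-order terms. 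Symmetry of $a(\cdot,\cdot)$ and $b(\cdot,\cdot)$ is used to combine $a(u,u-\psi)$ with $a(u-\psi,u)$; both are available since $A$ is symmetric and $b$ is the $L^2$ inner product.
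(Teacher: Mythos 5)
Your proof is correct: expanding $a(\psi,\psi)-\lambda b(\psi,\psi)$ around $u$ and killing the zeroth-order term with $a(u,u)=\lambda b(u,u)$ and the cross terms with $a(u,u-\psi)=\lambda b(u,u-\psi)$ (valid since $u-\psi\in H_0^1(\Omega)$), then dividing by $b(\psi,\psi)>0$, is exactly the standard argument. The paper itself gives no proof of this proposition, citing \cite{Babuska2,BabuskaOsborn,XuZhou_Eigen} instead, and your derivation is precisely the one found in those references, so there is nothing to add.
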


\section{Multilevel local and Parallel algorithms}\label{sec:LPA}
In this section, we present a new multilevel parallel algorithm to solve the eigenvalue problem
based on the combination of the local and parallel finite element technique and the multilevel correction method.
First, we introduce an one correction step with the local and parallel finite element scheme and then
present a parallel multilevel method for the eigevalue problem.

For the description of the numerical scheme, we need to define some notation.
Given an coarsest triangulation $\mathcal{T}_H(\Omega)$, we first divide the domain $\Omega$ into a number of disjoint
subdomains $D_1$, $\cdots$, $D_m$ such that $\bigcup_{j=1}^m\bar{D}_j=\bar{\Omega}$, $D_i\cap D_j=\emptyset$
(see Figure \ref{fig:DomainGi}), then enlarge each $D_j$ to obtain $\Omega_j$
that aligns with $\mathcal{T}_H(\Omega)$. We pick another sequence of subdomains
$G_j\subset\subset D_j\subset\Omega_j\subset \Omega$
 and (see Figure \ref{fig:DomainGi})
\begin{eqnarray*}
G_{m+1} = \Omega\setminus (\cup_{j=1}^m\bar{G_j}).
\end{eqnarray*}
\begin{figure}[htb]
\centering
\includegraphics[width=10cm,height=4.5cm]{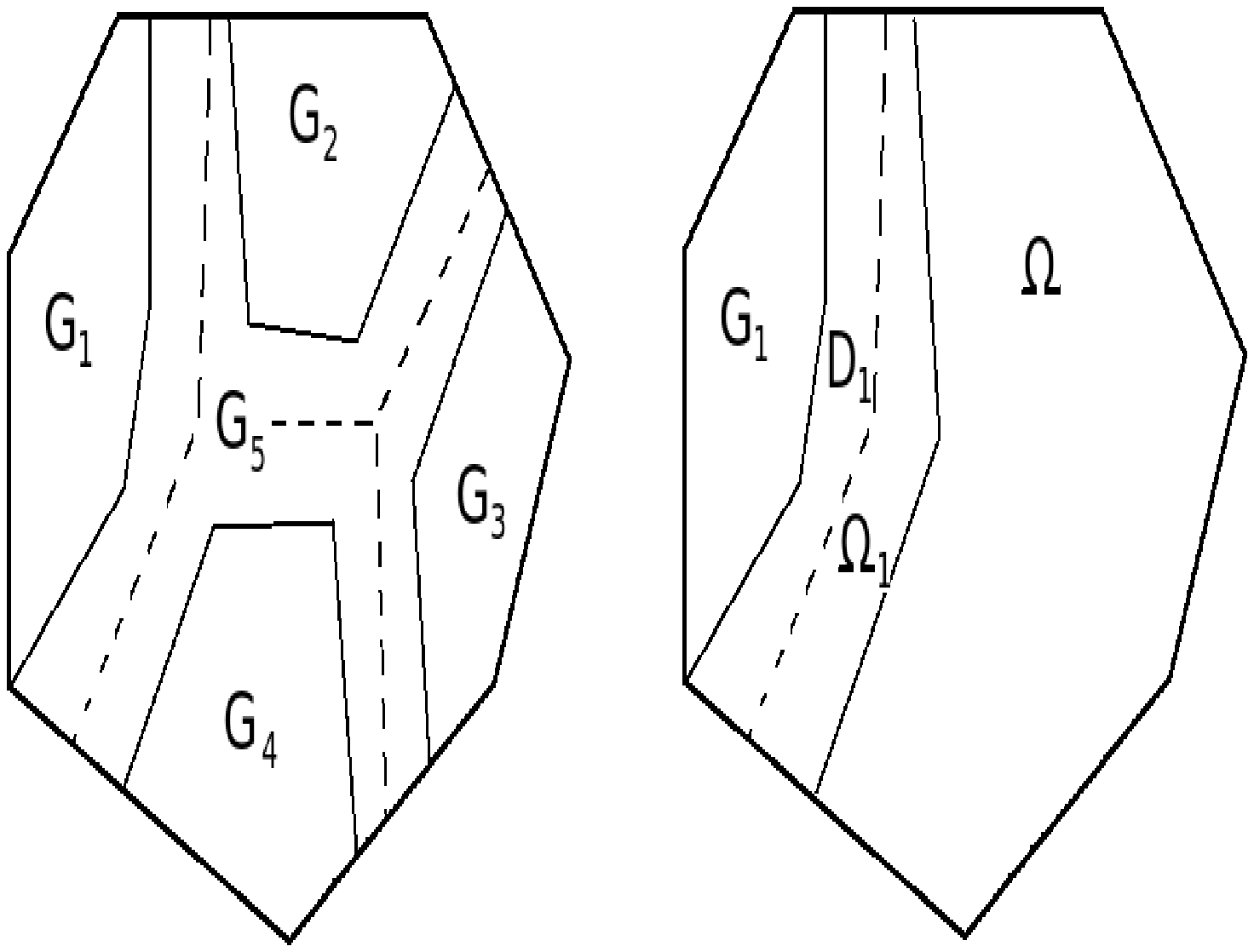}
\caption{$m=4$}\label{fig:DomainGi}
\end{figure}
In this paper we assume the domain decomposition satisfies the following property
\begin{equation}\label{Domain_Decomposition_Property}
\sum_{j=1}^{m}\|v\|_{\ell,\Omega_j}^2\lesssim \|v\|_{\ell,\Omega}^2
\end{equation}
for any $v\in H^{\ell}(\Omega)$ with $\ell=0,\ 1$.

\subsection{One correction step}
First, we present the one correction step  to improve the
accuracy of the given eigenvalue and eigenfunction approximation.
This correction method contains solving an auxiliary boundary value problem
in the finer finite element space on each subdomain
and an eigenvalue problem on the coarsest finite element space.

For simplicity of notation, we set
$(\lambda,u)=(\lambda_i,u_i)\ (i=1,2,\cdots,k,\cdots)$  and
$(\lambda_h, u_h)=(\lambda_{i,h},u_{i,h})\ (i=1,2,\cdots,N_h)$ to
denote an eigenpair and its corresponding approximation of problems (\ref{Weak_Eigenvalue_Problem}) and
(\ref{Discrete_Weak_Eigen_Problem}), respectively. For the clear understanding, we only describe the algorithm for
the simple eigenvalue case. The corresponding algorithm for the multiple eigenvalue case can be
given in the similar way as in \cite{Xie_Nonconforming}.

In order to do the correction step, we build original coarsest finite element space $V_{0H}(\Omega)$
on the background mesh $\mathcal{T}_H(\Omega)$. This coarsest finite element space $V_{0H}(\Omega)$ will be
used as the background space in our algorithm.

Assume we have obtained an eigenpair approximation
$(\lambda_{h_k},u_{h_k})\in\mathcal{R}\times V_{0h_k}(\Omega)$.
The one correction step will improve the accuracy of the
current eigenpair approximation $(\lambda_{h_k},u_{h_k})$.
Let $V_{0h_{k+1}}(\Omega)$ ba a finer finite element space such that
$V_{0h_k}(\Omega)\subset V_{0h_{k+1}}(\Omega)$. Here we assume the
finite element spaces $V_{0h_k}(\Omega)$ and $V_{0h_{k+1}}(\Omega)$
are consistent with the domain decomposition and $V_{0H}(\Omega)\subset V_{0h_k}(\Omega)$.
Based on this finer finite element space $V_{0h_{k+1}}(\Omega)$, we define the following one correction step.

\begin{algorithm}\label{Algm:One_Step_Correction}
One Correction Step

We have a given eigenpair approximation $(\lambda_{h_k},u_{h_k})\in\mathcal{R}\times V_{0h_k}(\Omega)$.
\begin{enumerate}
\item Define the following auxiliary boundary value problem:

For each $j = 1,2,\cdots,m$, find ${e}_{h_{k+1}}^j \in V_{0h_{k+1}}(\Omega_j)$ such that
\begin{equation}\label{Aux_Problem}
a({e}_{h_{k+1}}^j,v_{h_{k+1}})=\lambda_{h_k}b(u_{h_k},v_{h_{k+1}})-a({u}_{h_k},v_{h_{k+1}}),\ \
\ \forall v_{h_{k+1}}\in V_{0h_{k+1}}(\Omega_j).
\end{equation}
Set $\widetilde{u}_{h_{k+1}}^j=u_{h_k}+e_{h_{k+1}}^j\in V_{h_{k+1}}(\Omega_j)$.

\item  Construct $\widetilde{u}_{h_{k+1}}\in V_{0h_{k+1}}(\Omega)$ such that
$\widetilde{u}_{h_{k+1}} = \widetilde{u}_{h_{k+1}}^j$ in $G_j$ $(j = 1, \cdots, m)$
and $\widetilde{u}_{h_{k+1}} = \widetilde{u}_{h_{k+1}}^{m+1}$ in $G_{m+1}$  with
$\widetilde{u}_{h_{k+1}}^{m+1}$ being defined by solving the following problem:

Find $\widetilde{u}_{h_{k+1}}^{m+1}\in V_{h_{k+1}}(G_{m+1})$ such that
$\widetilde{u}_{h_{k+1}}^{m+1}|_{\partial G_j\cap \partial G_{m+1}}= \widetilde{u}_{h_{k+1}}^j$ $(j = 1, \cdots, m)$ and
\begin{equation}\label{G_m+1_Problem}
a(\widetilde{u}_{h_{k+1}}^{m+1}, v_{h_{k+1}}) = \lambda_{h_k} b(u_{h_k}, v_{h_{k+1}}),\ \ \ \forall v_{h_{k+1}}\in V_{0h_{k+1}}(G_{m+1}).
\end{equation}

\item  Define a new finite element
space $V_{H,h_{k+1}}=V_{0H}(\Omega)+{\rm span}\{\widetilde{u}_{h_{k+1}}\}$ and solve
the following eigenvalue problem:

Find $(\lambda_{h_{k+1}},u_{h_{k+1}})\in\mathcal{R}\times V_{H,h_{k+1}}$ such
that $b(u_{h_{k+1}},u_{h_{k+1}})=1$ and
\begin{eqnarray}\label{Eigen_Augment_Problem}
a(u_{h_{k+1}},v_{H,h_{k+1}})&=&\lambda_{h_{k+1}} b(u_{h_{k+1}},v_{H,h_{k+1}}),\ \ \
\forall v_{H,h_{k+1}}\in V_{H,h_{k+1}}.
\end{eqnarray}

\end{enumerate}

Summarize the above three steps into
\begin{eqnarray*}
(\lambda_{h_{k+1}},u_{h_{k+1}})={\it
Correction}(V_{0H}(\Omega),\lambda_{h_k}, u_{h_k},V_{0h_{k+1}}(\Omega)),
\end{eqnarray*}
where $\lambda_{h_k}$
and $u_{h_k}$ are the given eigenvalue and eigenfunction approximation, respectively.
\end{algorithm}

\begin{theorem}\label{Thm:Error_Estimate_One_Step_Correction}
Assume the current eigenpair approximation
$(\lambda_{h_k},u_{h_k})\in\mathcal{R}\times V_{0h_k}(\Omega)$ has the
following error estimates
\begin{eqnarray}
\|u-u_{h_k}\|_{1,\Omega} &\lesssim &\varepsilon_{h_k}(\lambda),\label{Estimate_u_u_h_k}\\
\|u-u_{h_k}\|_{0,\Omega}&\lesssim&\rho_{\Omega}(H)\varepsilon_{h_k}(\lambda),\label{Estimate_u_u_h_k_zero}\\
|\lambda-\lambda_{h_k}|&\lesssim&\varepsilon_{h_k}^2(\lambda).\label{Estimate_lambda_lambda_h_k}
\end{eqnarray}
Then after one step correction, the resultant approximation
$(\lambda_{h_{k+1}},u_{h_{k+1}})\in\mathcal{R}\times V_{0h_{k+1}}(\Omega)$ has the
following error estimates
\begin{eqnarray}
\|u-u_{h_{k+1}}\|_{1,\Omega} &\lesssim &\varepsilon_{h_{k+1}}(\lambda),\label{Estimate_u_u_h_{k+1}}\\
\|u-u_{h_{k+1}}\|_{0,\Omega}&\lesssim&
\rho_{\Omega}(H)\varepsilon_{h_{k+1}}(\lambda),\label{Estimate_u_u_h_{k+1}_zero}\\
|\lambda-\lambda_{h_{k+1}}|&\lesssim&\varepsilon_{h_{k+1}}^2(\lambda),\label{Estimate_lambda_lambda_h_{k+1}}
\end{eqnarray}
where
$\varepsilon_{h_{k+1}}(\lambda):=\rho_{\Omega}(H)\varepsilon_{h_k}(\lambda)
+\varepsilon_{h_k}^2(\lambda)+\delta_{h_{k+1}}(\lambda)$.
\end{theorem}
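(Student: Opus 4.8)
The plan is to follow the standard multilevel-correction analysis (as in \cite{LinXie,LinXie_MultiGrid_Eigenvalue}), but with the extra bookkeeping needed because the corrected function $\widetilde{u}_{h_{k+1}}$ is assembled locally on the subdomains $\Omega_j$ rather than obtained from a single global solve. First I would analyze the auxiliary problem (\ref{Aux_Problem}). Note that $e^j_{h_{k+1}}$ is, up to the given data $u_{h_k}$, the Galerkin approximation on $\Omega_j$ of the solution $e^j$ of $a(e^j,v)=\lambda_{h_k}b(u_{h_k},v)-a(u_{h_k},v)$. Since $u_{h_k}$ is already a good approximation, the right-hand side is small: testing against $v=e^j$ and using $\|u-u_{h_k}\|_{1,\Omega}\lesssim\varepsilon_{h_k}(\lambda)$, $|\lambda-\lambda_{h_k}|\lesssim\varepsilon_{h_k}^2(\lambda)$ together with (\ref{Weak_Eigenvalue_Problem}), one gets that $u_{h_k}+e^j$ differs from $u$ by a term controlled by $\rho_\Omega(H)\varepsilon_{h_k}(\lambda)+\varepsilon_{h_k}^2(\lambda)$ plus the best-approximation error $\delta_{h_{k+1}}(\lambda)$ of $u$ in $V_{0h_{k+1}}$. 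This is exactly the structure of $\varepsilon_{h_{k+1}}(\lambda)$.

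Next I would pass from the local functions $\widetilde{u}^j_{h_{k+1}}$ to the glued global function $\widetilde{u}_{h_{k+1}}$. On each $G_j\subset\subset D_j\subset\Omega_j$ I would invoke the local error estimate, Proposition \ref{Prop:Local_Estimate}, applied to the difference $\widetilde{u}^j_{h_{k+1}}-P_{h_{k+1}}u$ (which satisfies a discrete equation on $V_{0h_{k+1}}(\Omega_j)$ with a small right-hand side), to control $\|\widetilde{u}^j_{h_{k+1}}-P_{h_{k+1}}u\|_{1,G_j}$ by an $L^2$ term on $\Omega_j$ plus an $H^{-1}$ term. For the interior region $G_{m+1}$, the patched problem (\ref{G_m+1_Problem}) together with the fractional-norm estimate (Proposition \ref{Proposition_Fractional_Norm}) and the trace of the already-estimated boundary data $\widetilde{u}^j_{h_{k+1}}$ gives the corresponding bound there. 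Summing over $j=1,\dots,m+1$ and using the domain-decomposition property (\ref{Domain_Decomposition_Property}) yields $\|u-\widetilde{u}_{h_{k+1}}\|_{1,\Omega}\lesssim\varepsilon_{h_{k+1}}(\lambda)$, and the $L^2$ estimate follows by the same duality/Aubin--Nitsche argument encoded in Proposition 2.4, giving the extra $\rho_\Omega(H)$ factor.

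Finally I would handle the small eigenvalue problem (\ref{Eigen_Augment_Problem}) on $V_{H,h_{k+1}}=V_{0H}(\Omega)+\mathrm{span}\{\widetilde{u}_{h_{k+1}}\}$. Since $\widetilde{u}_{h_{k+1}}\in V_{H,h_{k+1}}$ and $\|u-\widetilde{u}_{h_{k+1}}\|_{1,\Omega}\lesssim\varepsilon_{h_{k+1}}(\lambda)$, the subspace $V_{H,h_{k+1}}$ contains a good approximation to $u$; a Céa-type argument for the eigenvalue problem (the min-max characterization, or the spectral perturbation bound of Babu\v ska--Osborn) gives $\|u-u_{h_{k+1}}\|_{1,\Omega}\lesssim\varepsilon_{h_{k+1}}(\lambda)$ and $\|u-u_{h_{k+1}}\|_{0,\Omega}\lesssim\rho_\Omega(H)\varepsilon_{h_{k+1}}(\lambda)$, provided $\varepsilon_{h_{k+1}}(\lambda)$ is small enough to separate $\lambda$ from the rest of the spectrum. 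Then the eigenvalue estimate (\ref{Estimate_lambda_lambda_h_{k+1}}) comes from the Rayleigh-quotient expansion, Proposition \ref{Prop:Rayleigh_Quotient_Error}, with $\psi=u_{h_{k+1}}$: the leading term $a(u-\psi,u-\psi)/b(\psi,\psi)$ is bounded by $\|u-u_{h_{k+1}}\|_{1,\Omega}^2\lesssim\varepsilon_{h_{k+1}}^2(\lambda)$, and the second term is even smaller. I expect the main obstacle to be the gluing step: controlling the interface mismatch between the independently computed $\widetilde{u}^j_{h_{k+1}}$ and showing that the local estimates of Proposition \ref{Prop:Local_Estimate} on the overlapping enlarged domains $\Omega_j$ can be summed cleanly into a global $H^1$ bound without losing powers of $H$ — this is where Assumption A.0, the alignment of subdomains with the meshes, and the decomposition property (\ref{Domain_Decomposition_Property}) all have to be used carefully.
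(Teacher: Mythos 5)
Your overall skeleton does match the paper's proof (local interior estimate on each $G_j$, fractional-norm/trace treatment of $G_{m+1}$, summation via (\ref{Domain_Decomposition_Property}), Babu\v{s}ka--Osborn theory for the small problem (\ref{Eigen_Augment_Problem}), Rayleigh quotient via Proposition \ref{Prop:Rayleigh_Quotient_Error}), but there is a genuine gap at the decisive quantitative step: nothing in your argument actually produces the factor $\rho_{\Omega}(H)$ multiplying $\varepsilon_{h_k}(\lambda)$ in $\varepsilon_{h_{k+1}}(\lambda)$. In your first paragraph you claim that testing (\ref{Aux_Problem}) with $v=e^j_{h_{k+1}}$ and using (\ref{Estimate_u_u_h_k})--(\ref{Estimate_lambda_lambda_h_k}) already shows that $u_{h_k}+e^j_{h_{k+1}}$ approximates $u$ to order $\rho_{\Omega}(H)\varepsilon_{h_k}(\lambda)+\varepsilon_{h_k}^2(\lambda)+\delta_{h_{k+1}}(\lambda)$. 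It does not: the residual functional $v\mapsto\lambda_{h_k}b(u_{h_k},v)-a(u_{h_k},v)=b(\lambda_{h_k}u_{h_k}-\lambda u,v)+a(u-u_{h_k},v)$ contains the term $a(u-u_{h_k},v)$, which on $V_{0h_{k+1}}(\Omega_j)$ is only an $H^{-1}$-functional of size $\varepsilon_{h_k}(\lambda)$ (there is no Galerkin orthogonality of $u-u_{h_k}$ against local test functions at level $h_{k+1}$), so an energy test gives only $\|e^j_{h_{k+1}}\|_{1,\Omega_j}\lesssim\varepsilon_{h_k}(\lambda)$, with no $\rho_{\Omega}(H)$ gain; moreover, since $e^j_{h_{k+1}}$ vanishes on $\partial\Omega_j$, the local correction cannot improve $u_{h_k}$ near $\partial\Omega_j$, which is precisely why only interior estimates on $G_j$ are available at all.

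The same issue resurfaces after you invoke Proposition \ref{Prop:Local_Estimate}: that proposition leaves you with the $L^2$ term on $\Omega_j$, and the whole theorem hinges on bounding $\|\widetilde{u}^j_{h_{k+1}}-u_{h_k}\|_{0,\Omega_j}=\|e^j_{h_{k+1}}\|_{0,\Omega_j}$ by $\rho_{\Omega_j}(H)$ times quantities of order $\varepsilon_{h_k}(\lambda)$. You attribute this to ``the same duality/Aubin--Nitsche argument encoded in Proposition 2.4'', but no such proposition exists, and the duality statement the paper does provide (the unnumbered proposition about $P_h$ in Section 2) only compares $\|u-P_hu\|_{0,\Omega}$ with $\|u-P_hu\|_{1,\Omega}$; a plain Aubin--Nitsche argument on $\Omega_j$ at level $h_{k+1}$ would give a factor $\rho_{\Omega_j}(h_{k+1})$ against data that are not $L^2$-small, or no gain at all against the $H^{-1}$ part. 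The missing idea is the paper's two-level duality step (\ref{Equation_Nitsche})--(\ref{equ:Key5}): introduce both the fine dual Galerkin approximation $w^j_{h_{k+1}}$ and a coarse one $w^j_H\in V_{0H}(\Omega_j)$, and exploit that $w^j_H$, extended by zero, lies in $V_{0h_k}(\Omega)$, so that by (\ref{Discrete_Weak_Eigen_Problem}), (\ref{Weak_Eigenvalue_Problem}) and (\ref{Projection_Problem}) the residual tested against $w^j_H$ collapses to the $L^2$-small term $b(\lambda_{h_k}u_{h_k}-\lambda u,w^j_H)$; only the difference $w^j_{h_{k+1}}-w^j_H$, whose $H^1$-norm is $\lesssim\rho_{\Omega_j}(H)\|\phi\|_{0,\Omega_j}$, is ever tested against the non-small part. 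This is exactly where the hypotheses $V_{0H}(\Omega)\subset V_{0h_k}(\Omega)$ and (\ref{Estimate_u_u_h_k_zero}) enter, and it is what generates the contraction factor $\rho_{\Omega}(H)$ on which Theorem \ref{Thm:Multi_Correction} later relies; without this step your argument only yields $\varepsilon_{h_{k+1}}(\lambda)\approx\varepsilon_{h_k}(\lambda)+\delta_{h_{k+1}}(\lambda)$, which is not enough.
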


\begin{proof}
We focus on estimating $\|u-\widetilde{u}_{h_{k+1}}\|_{1,\Omega}$. First, we have
\begin{equation}\label{equ:Key0}
\|u-\widetilde{u}_{h_{k+1}}\|_{1,\Omega}
\lesssim \|u-P_{h_{k+1}}u\|_{1,\Omega} + \|\widetilde{u}_{h_{k+1}}-P_{h_{k+1}}u\|_{1,\Omega},
\end{equation}
and
\begin{equation}\label{equ:Key1}
\|\widetilde{u}_{h_{k+1}}-P_{h_{k+1}}u\|_{1,\Omega}^2
=\sum_{j=1}^m\|\widetilde{u}_{h_{k+1}}^j-P_{h_{k+1}}u\|_{1,G_j}^2
+\|\widetilde{u}_{h_{k+1}}^{m+1}-P_{h_{k+1}}u\|_{1,G_{m+1}}^2.
\end{equation}
From problems (\ref{Projection_Problem}), (\ref{Weak_Eigenvalue_Problem}) and
(\ref{Aux_Problem}), the following equation holds
\begin{equation*}
a(\widetilde{u}_{h_{k+1}}^j-P_{h_{k+1}}u,v)=b(\lambda_{h_k}u_{h_k}-\lambda u,v),\ \ \ \forall v\in V_{0h_{k+1}}(\Omega_{j}),
\end{equation*}
for $j=1,2,\cdots,m$.
According to Proposition \ref{Prop:Local_Estimate}
\begin{eqnarray}\label{equ:Key2}
&&\|\widetilde{u}_{h_{k+1}}^j-P_{h_{k+1}}u\|_{1,G_j} \lesssim
\|\widetilde{u}_{h_{k+1}}^j-P_{h_{k+1}}u\|_{0,\Omega_j}+\|\lambda_{h_k}u_{h_k}-\lambda u\|_{-1,\Omega_j}\nonumber\\
&\lesssim & \|\widetilde{u}_{h_{k+1}}^j-u_{h_k}\|_{0,\Omega_j}+\|u_{h_k}-P_{h_{k+1}}u\|_{0,\Omega_j}
+\|\lambda_{h_k}u_{h_k}-\lambda u\|_{0,\Omega_j}.
\end{eqnarray}
We will estimate the first term, i.e. $\|{e}_{h_{k+1}}^j\|_{0,\Omega_j}$ by
using the Aubin-Nitsche duality argument.

Given any $\phi\in L^2(\Omega_j)$, there exists $w^j\in H^1_0(\Omega_j)$ such that
\begin{eqnarray*}
a(v,w^j) &=& b(v,\phi),\ \ \ \forall v\in H^1_0(\Omega_j).
\end{eqnarray*}
Let $w^j_{h_{k+1}}\in V_{0h_{k+1}}(\Omega_j)$ and $w^j_{H}\in V_{0H}(\Omega_j)$ satisfying
\begin{eqnarray*}
a(v_{h_{k+1}},w^j_{h_{k+1}}) &=& a(v_{h_{k+1}},w^j),\ \ \ \forall v_{h_{k+1}}\in V_{0h_{k+1}}(\Omega_j),\\
a(v_{H},w^j_{H}) &=& a(v_{H},w^j),\ \ \ \ \  \ \forall v_{H}\in V_{0H}(\Omega_j).
\end{eqnarray*}
Then the following equations hold
\begin{eqnarray}\label{Equation_Nitsche}
&&b(\widetilde{u}_{h_{k+1}}^j-u_{h_k},\phi) = a(\widetilde{u}_{h_{k+1}}^j-u_{h_k},w^j_{h_{k+1}})\nonumber\\
&=& b(\lambda_{h_k}u_{h_k},w^j_{h_{k+1}})-a(u_{h_k},w^j_{h_{k+1}})\nonumber\\
&=& b(\lambda_{h_k}u_{h_k}-\lambda u,w^j_{h_{k+1}})+a(P_{h_{k+1}}u-u_{h_k},w^j_{h_{k+1}})\nonumber\\
&=& b(\lambda_{h_k}u_{h_k}-\lambda u,w^j_{h_{k+1}}-w^j_{H})+b(\lambda_{h_k}u_{h_k}-\lambda u,w^j_{H})\nonumber\\
&&\ \ \ \ \ \ +a(P_{h_{k+1}}{u}-u_{h_k},w^j_{h_{k+1}})\nonumber\\
&=& b(\lambda_{h_k}u_{h_k}-\lambda u,w^j_{h_{k+1}}-w^j_{H})+a(P_{h_{k+1}}{u}-u_{h_k},w^j_{h_{k+1}}-w^j_{H}),
\end{eqnarray}
where  $V_{0H}(\Omega)\subset V_{0h_k}(\Omega)$ and (\ref{Projection_Problem}), (\ref{Weak_Eigenvalue_Problem}), (\ref{Discrete_Weak_Eigen_Problem}), (\ref{Aux_Problem}) are used in the last equation.

Combining (\ref{Equation_Nitsche}) and the following error estimates
\begin{eqnarray*}
\|w-w^j_{h_{k+1}}\|_{1,\Omega_j} \lesssim \rho_{\Omega_j}(h_{k+1}) \|\phi\|_{0,\Omega_j},\ \ \
\|w-w^j_{H}\|_{1,\Omega_j} \lesssim \rho_{\Omega_j}(H) \|\phi\|_{0,\Omega_j},
\end{eqnarray*}
we have
\begin{equation}\label{equ:Key5}
\|\widetilde{u}_{h_{k+1}}^j-u_{h_k}\|_{0,\Omega_j}
\lesssim\rho_{\Omega_j}(H)\big(\|u_{h_k}-P_{h_{k+1}}u\|_{1,\Omega_j}+\|\lambda_{h_k}u_{h_k}-\lambda u\|_{0,\Omega_j}\big).
\end{equation}
From (\ref{equ:Key2}) and (\ref{equ:Key5}), for $j = 1,2\dots,m$, we have
\begin{eqnarray}\label{equ:Key3}
&&\|\widetilde{u}_{h_{k+1}}^j-P_{h_{k+1}}u\|_{1,G_j}\lesssim \rho_{\Omega_j}(H)\|u_{h_k}-P_{h_{k+1}}u\|_{1,\Omega_j}\nonumber\\
&&\ \ \ \ \ \ \ \ \ \ \ \ \
\ \ \ \ \  +\|u_{h_k}-P_{h_{k+1}}u\|_{0,\Omega_j}+\|\lambda_{h_k}u_{h_k}-\lambda u\|_{0,\Omega_j}.
\end{eqnarray}

Now, we estimate $\|\widetilde{u}_{h_{k+1}}^{m+1}-P_{h_{k+1}}u\|_{1,G_{m+1}}$.
From (\ref{Projection_Problem}), (\ref{Weak_Eigenvalue_Problem}) and (\ref{G_m+1_Problem}), we obtain
\begin{equation*}
a(\widetilde{u}_{h_{k+1}}^{m+1}-P_{h_{k+1}}u,v)=b(\lambda_{h_k}u_{h_k}-\lambda u,v),\ \ \ \forall v\in V_{0h_{k+1}}(G_{m+1}).
\end{equation*}
For any $v\in V_{0h_{k+1}}(G_{m+1})$, the following estimates hold
\begin{eqnarray}\label{Estimate_G_m+1}
&&\|\widetilde{u}_{h_{k+1}}^{m+1}-P_{h_{k+1}}u\|_{1,G_{m+1}}^2\nonumber\\
&\lesssim& a(\widetilde{u}_{h_{k+1}}^{m+1}-P_{h_{k+1}}u,\widetilde{u}_{h_{k+1}}^{m+1}-P_{h_{k+1}}u-v)
+b(\lambda_{h_k}u_{h_k}-\lambda u,v)\nonumber\\
&\lesssim & \|\widetilde{u}_{h_{k+1}}^{m+1}-P_{h_{k+1}}u\|_{1,G_{m+1}}
\inf_{\chi\in V_{0h_{k+1}}(G_{m+1})}\|\widetilde{u}_{h_{k+1}}^{m+1}-P_{h_{k+1}}u-\chi\|_{1,G_{m+1}}\nonumber\\
&&+\|\lambda_{h_k} u_{h_k}-\lambda u\|_{-1,G_{m+1}}
\big(\|\widetilde{u}_{h_{k+1}}^{m+1}-P_{h_{k+1}}u\|_{1,G_{m+1}}\nonumber\\
&& \ \ \ \ \ \
+\inf_{\chi\in V_{0h_{k+1}}(G_{m+1})}\|\widetilde{u}_{h_{k+1}}^{m+1}-P_{h_{k+1}}u-\chi\|_{1,G_{m+1}}\big).
\end{eqnarray}
Combining (\ref{Estimate_G_m+1}) and the following estimate
\begin{eqnarray*}
\|\widetilde{u}_{h_{k+1}}^{m+1}-P_{h_{k+1}}u\|_{1/2,\partial G_{m+1}}^2
&\lesssim& \sum_{j=1}^m\|\widetilde{u}_{h_{k+1}}^j-P_{h_{k+1}}u\|_{1/2,\partial G_{j}}^2\nonumber\\
&\lesssim& \sum_{j=1}^m\|\widetilde{u}_{h_{k+1}}^j-P_{h_{k+1}}u\|_{1,G_{j}}^2
\end{eqnarray*}
and Proposition \ref{Proposition_Fractional_Norm}, we have
\begin{eqnarray}\label{equ:Key4}
&&\|\widetilde{u}_{h_{k+1}}^{m+1}-P_{h_{k+1}}u\|_{1,G_{m+1}}^2\nonumber\\
&\lesssim & \inf_{\chi\in V_{0h_{k+1}}(G_{m+1})}\|\widetilde{u}_{h_{k+1}}^{m+1}-P_{h_{k+1}}u-\chi\|_{1,G_{m+1}}^2
+\|\lambda_{h_k} u_{h_k}-\lambda u\|_{-1,G_{m+1}}^2\nonumber\\
&\lesssim & \|\widetilde{u}_{h_{k+1}}-P_{h_{k+1}}u\|_{1/2,\partial G_{m+1}}^2
+\|\lambda_{h_k} u_{h_k}-\lambda u\|_{0,G_{m+1}}^2\nonumber\\
&\lesssim &
\sum_{j=1}^m\|\widetilde{u}_{h_{k+1}}^j-P_{h_{k+1}}u\|_{1,G_{j}}^2
+\|\lambda_{h_k} u_{h_k}-\lambda u\|_{0,G_{m+1}}^2.
\end{eqnarray}
Combining (\ref{Domain_Decomposition_Property}), (\ref{equ:Key1}), (\ref{equ:Key3})
and (\ref{equ:Key4}) leads to
\begin{eqnarray*}
&&\|\widetilde{u}_{h_{k+1}}-P_{h_{k+1}}u\|_{1,\Omega}^2\nonumber\\
&\lesssim& \sum_{j=1}^m\rho_{\Omega_j}(H)^2\|u_{h_k}-P_{h_{k+1}}u\|_{1,\Omega_j}
+\sum_{j=1}^m\|{u}_{h_k}-P_{h_{k+1}}u\|_{0,\Omega_{j}}^2\nonumber\\
&&\ \ \ +\sum_{j=1}^m\|\lambda_{h_k} u_{h_k}-\lambda u\|_{0,\Omega_{j}}^2+\|\lambda_{h_k} u_{h_k}-\lambda u\|_{0,G_{m+1}}^2\nonumber\\
&\lesssim& \rho^2_{\Omega}(H)\|{u}_{h_k}-P_{h_{k+1}}u\|_{1,\Omega}^2+\|{u}_{h_k}-P_{h_{k+1}}u\|_{0,\Omega}^2
+\|\lambda_{h_k} u_{h_k}-\lambda u\|_{0,\Omega}^2\nonumber\\
&\lesssim&\rho^2_{\Omega}(H)\|{u}_{h_k}-u\|_{1,\Omega}^2
+\rho^2_{\Omega}(H)\|u-P_{h_{k+1}}u\|_{1,\Omega}^2+\|{u}_{h_k}-u\|_{0,\Omega}^2\nonumber\\
&&+\|u-P_{h_{k+1}}u\|_{0,\Omega}^2+|\lambda-\lambda_{h_k}|^2\|u\|_{0,\Omega}^2
+\lambda^2\|u_{h_k}- u\|_{0,\Omega}^2.
\end{eqnarray*}
Together with the error estimate of the finite element projection
\begin{eqnarray*}
\|u-P_{h_{k+1}}u\|_{1,\Omega} &\lesssim&\delta_{h_{k+1}}(\lambda)
\end{eqnarray*}
and (\ref{Estimate_lambda_lambda_h_k}), (\ref{equ:Key0}), we have
\begin{eqnarray}\label{Error_tilde_u_h_{k+1}}
\|u-\widetilde{u}_{h_{k+1}}\|_{1,\Omega}&\lesssim&\|u-P_{h_{k+1}}u\|_{1,\Omega}+
|\lambda-\lambda_{h_k}|+\|u-u_{h_k}\|_{0,\Omega}\nonumber\\
&&\ \ \ +\rho_{\Omega}(H)\|u-u_{h_k}\|_{1,\Omega}\nonumber\\
&\lesssim&\rho_{\Omega}(H)\varepsilon_{h_k}(\lambda)+\varepsilon_{h_k}^2(\lambda)
+\delta_{h_{k+1}}(\lambda).
\end{eqnarray}
From (\ref{Error_u_h_{k+1}}) and (\ref{Error_tilde_u_h_{k+1}}), we can obtain (\ref{Estimate_u_u_h_{k+1}}).

We come to estimate the error for the eigenpair solution
$(\lambda_{h_{k+1}},u_{h_{k+1}})$ of problem (\ref{Eigen_Augment_Problem}).
Based on the error estimate theory of eigenvalue problems by finite
element methods (see, e.g., Proposition \ref{Prop:Eigen_Error_Estimate} or
\cite[Theorem 9.1]{BabuskaOsborn}) and the definition of the space $V_{H,h_{k+1}}$,
the following estimates hold
\begin{eqnarray}\label{Error_u_h_{k+1}}
\|u-u_{h_{k+1}}\|_{1,\Omega}&\lesssim& \sup_{w\in M(\lambda)}\inf_{v\in
V_{H,h_{k+1}}}\|w-v\|_{1,\Omega}\lesssim \|u-\widetilde{u}_{h_{k+1}}\|_{1,\Omega},
\end{eqnarray}
and
\begin{eqnarray*}\label{Error_u_h_{k+1}_negative}
\|u-u_{h_{k+1}}\|_{0,\Omega}&\lesssim&\widetilde{\rho}_{\Omega}(H)\|u-u_{h_{k+1}}\|_{1,\Omega},
\end{eqnarray*}
where
\begin{eqnarray*}\label{Eta_a_H}
\widetilde{\rho}_{\Omega}(H)&=&\sup_{f\in V,\|f\|_{0,\Omega}=1}\inf_{v\in
V_{H,h_{k+1}}}\|L^{-1}f-v\|_{1,\Omega} \leq \rho_{\Omega}(H).
\end{eqnarray*}
So we obtain the desired result (\ref{Estimate_u_u_h_{k+1}}), (\ref{Estimate_u_u_h_{k+1}_zero}) and
the estimate (\ref{Estimate_lambda_lambda_h_{k+1}}) can be obtained by
Proposition \ref{Prop:Rayleigh_Quotient_Error} and (\ref{Estimate_u_u_h_{k+1}}).
\end{proof}

\subsection{Multilevel correction process}
Now we introduce a type of multilevel local and parallel
scheme based on the one correction step defined in Algorithm
\ref{Algm:One_Step_Correction}. This type of multilevel method can obtain the same optimal error
estimate as solving the eigenvalue problem directly in the finest
finite element space.

In order to do multilevel local and parallel
scheme, we define a sequence of triangulations $\mathcal{T}_{h_k}(\Omega)$ of $\Omega$ determined as follows.
Suppose $\mathcal{T}_{h_1}(\Omega)$ is obtained from $\mathcal{T}_H(\Omega)$ by the regular refinement
 and let $\mathcal{T}_{h_k}(\Omega)$ be obtained from $\mathcal{T}_{h_{k-1}}(\Omega)$ via
 regular refinement (produce $\beta^d$ congruent elements) such that
$$h_k\approx\frac{1}{\beta}h_{k-1}\ \ \ \ \ {\rm for}\ k\geq 2.$$
Based on this sequence of meshes, we construct the corresponding linear finite element spaces such that
for each $j = 1,2,\cdots,m$
\begin{eqnarray*}
V_{0H}(\Omega_j)\subset V_{0h_1}(\Omega_j)\subset V_{0h_2}(\Omega_j)\subset\cdots\subset V_{0h_n}(\Omega_j)
\end{eqnarray*}
and the following relation of approximation errors holds
\begin{eqnarray}\label{Error_k_k_1}
\delta_{h_k}(\lambda)\approx\frac{1}{\beta}\delta_{h_{k-1}}(\lambda),\ \ \ k=2,\cdots,n.
\end{eqnarray}

\begin{remark}
The relation (\ref{Error_k_k_1}) is reasonable since we can choose
$\delta_{h_k}(\lambda)=h_k\ (k=1,\cdots,n)$. Always the upper bound of
the estimate $\delta_{h_k}(\lambda)\lesssim h_k$ holds. Recently, we also obtain the
lower bound $\delta_{h_k}(\lambda)\gtrsim h_k$ (c.f. \cite{LinXieXu}).
\end{remark}

\begin{algorithm}\label{Algm:Multi_Correction}
Multilevel Correction Scheme
\begin{enumerate}
\item Solve the following eigenvalue problem in $V_{0h_1}(\Omega)$:

Find $(\lambda_{h_1},u_{h_1})\in \mathcal{R}\times V_{0h_1}(\Omega)$ such that
$b(u_{h_1},u_{h_1})=1$ and
\begin{equation*}
a(u_{h_1},v_{h_1})=\lambda_{h_1}b(u_{h_1},v_{h_1}),\ \ \ \ \forall v_{h_1}\in V_{0h_1}(\Omega).
\end{equation*}
\item Construct a series of finer finite element
spaces $V_{0h_2}(\Omega_j),\cdots,V_{0h_n}(\Omega_j)$
such that $\rho_{\Omega}(H)\gtrsim
\delta_{h_1}(\lambda)\geq \delta_{h_2}(\lambda)\geq\cdots\geq
\delta_{h_n}(\lambda)$ and (\ref{Error_k_k_1}) holds.
\item Do $k=1,\cdots,n-1$
\begin{itemize}
\item Obtain a new eigenpair approximation
$(\lambda_{h_{k+1}},u_{h_{k+1}})\in \mathcal{R}\times V_{0h_{k+1}}(\Omega)$
by Algorithm \ref{Algm:One_Step_Correction}
\begin{eqnarray*}
(\lambda_{h_{k+1}},u_{h_{k+1}})={\it Correction}(V_{0H}(\Omega),\lambda_{h_k},u_{h_k},V_{0h_{k+1}}(\Omega)).
\end{eqnarray*}
\end{itemize}
end Do
\end{enumerate}
Finally, we obtain an eigenpair approximation $(\lambda_{h_n},u_{h_n})\in\mathcal{R}\times V_{0h_n}(\Omega)$.
\end{algorithm}

\begin{theorem}\label{Thm:Multi_Correction}
After implementing Algorithm \ref{Algm:Multi_Correction}, there exists an eigenfunction
$u\in M(\lambda)$  such that the resultant
eigenpair approximation $(\lambda_{h_n},u_{h_n})$ has the following
error estimate
\begin{eqnarray}
\|u-u_{h_n}\|_{1,\Omega} &\lesssim&\delta_{h_n}(\lambda),\label{Multi_Correction_Err_fun1}\\
\|u-u_{h_n}\|_{0,\Omega}&\lesssim&\rho_{\Omega}(H)\delta_{h_n}(\lambda),\label{Multi_Correction_Err_fun0}\\
|\lambda-\lambda_{h_n}|&\lesssim&\delta_{h_n}^2(\lambda),\label{Multi_Correction_Err_eigen}
\end{eqnarray}
under the condition $C\beta\rho_{\Omega}(H)<1$ for some constant $C$.
\end{theorem}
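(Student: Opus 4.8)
The natural route is induction on the level index $k$, with Theorem~\ref{Thm:Error_Estimate_One_Step_Correction} doing the work at each step and Proposition~\ref{Prop:Eigen_Error_Estimate} supplying the start. For the base case $k=1$ the pair $(\lambda_{h_1},u_{h_1})$ is obtained by solving the eigenvalue problem directly in $V_{0h_1}(\Omega)$, so Proposition~\ref{Prop:Eigen_Error_Estimate} gives $\|u-u_{h_1}\|_{1,\Omega}\lesssim\delta_{h_1}(\lambda)$, $\|u-u_{h_1}\|_{0,\Omega}\lesssim\rho_{\Omega}(h_1)\delta_{h_1}(\lambda)$ and $|\lambda-\lambda_{h_1}|\lesssim\delta_{h_1}^2(\lambda)$. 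Since $V_{0H}(\Omega)\subset V_{0h_1}(\Omega)$ forces $\rho_{\Omega}(h_1)\le\rho_{\Omega}(H)$, these are exactly hypotheses (\ref{Estimate_u_u_h_k})--(\ref{Estimate_lambda_lambda_h_k}) of Theorem~\ref{Thm:Error_Estimate_One_Step_Correction} at level $1$ with the choice $\varepsilon_{h_1}(\lambda):=\delta_{h_1}(\lambda)$; I also record here that $\delta_{h_k}(\lambda)$ is non-increasing in $k$, as guaranteed by Step~2 of Algorithm~\ref{Algm:Multi_Correction}.

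Running Theorem~\ref{Thm:Error_Estimate_One_Step_Correction} successively for $k=1,\dots,n-1$, which is exactly the loop in Algorithm~\ref{Algm:Multi_Correction}, propagates the three estimates up the hierarchy with an error quantity $\eta_{h_k}$ --- playing the role of $\varepsilon_{h_k}(\lambda)$ in Theorem~\ref{Thm:Error_Estimate_One_Step_Correction} --- obeying
\begin{equation*}
\eta_{h_{k+1}}=C_1\big(\rho_{\Omega}(H)\,\eta_{h_k}+\eta_{h_k}^2+\delta_{h_{k+1}}(\lambda)\big),\qquad \eta_{h_1}=C_2\,\delta_{h_1}(\lambda),
\end{equation*}
where $C_1$ is the generic constant of Theorem~\ref{Thm:Error_Estimate_One_Step_Correction} and $C_2$ that of the base case, both independent of $k$. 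The core of the argument is to show that this recursion stays linear, i.e.\ $\eta_{h_k}\lesssim\delta_{h_k}(\lambda)$ uniformly in $k$. I would prove the explicit bound $\eta_{h_k}\le C_0\,\delta_{h_k}(\lambda)$ by induction, with $C_0$ \emph{fixed first}, say $C_0=2\max\{C_1,C_2\}$. Assuming it at level $k$ and inserting into the recursion, I would rewrite the linear term using the refinement relation $\delta_{h_k}(\lambda)\approx\beta\,\delta_{h_{k+1}}(\lambda)$ from (\ref{Error_k_k_1}), and the quadratic term using that relation together with $\delta_{h_k}(\lambda)\le\delta_{h_1}(\lambda)\lesssim\rho_{\Omega}(H)$ (Step~2 of Algorithm~\ref{Algm:Multi_Correction}), obtaining $\eta_{h_k}^2\le C_0^2\delta_{h_k}^2(\lambda)\lesssim C_0^2\beta\rho_{\Omega}(H)\,\delta_{h_{k+1}}(\lambda)$. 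This yields $\eta_{h_{k+1}}\le\big(C_1+c\,C_1(C_0+C_0^2)\,\beta\rho_{\Omega}(H)\big)\delta_{h_{k+1}}(\lambda)$ for a fixed constant $c$, and demanding that the bracket not exceed $C_0$ is precisely a smallness condition of the form $C\beta\rho_{\Omega}(H)<1$. The induction then closes, $\eta_{h_n}\lesssim\delta_{h_n}(\lambda)$, and the level-$n$ estimates are exactly the claimed (\ref{Multi_Correction_Err_fun1})--(\ref{Multi_Correction_Err_eigen}).

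The step I expect to be the main obstacle is the quadratic term $\eta_{h_k}^2$: it does not reduce to $\delta_{h_{k+1}}(\lambda)$ on its own, and the trick is to exploit that $\delta_{h_k}(\lambda)$ is already as small as $\rho_{\Omega}(H)$, so $\delta_{h_k}^2(\lambda)\lesssim\rho_{\Omega}(H)\delta_{h_k}(\lambda)$; combined with the geometric decay $\delta_{h_k}(\lambda)\approx\beta\delta_{h_{k+1}}(\lambda)$ this is exactly what makes the product $\beta\rho_{\Omega}(H)$ surface and dictates the smallness hypothesis. A secondary but genuine subtlety is the order of the constant choices: since $C_0$ occurs squared in the bound for $\eta_{h_{k+1}}$, one must fix $C_0$ in terms of $C_1,C_2$ before choosing how small $\beta\rho_{\Omega}(H)$ must be, on pain of a circular chase of constants. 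Finally, as the algorithm is described only for a simple eigenvalue, $M(\lambda)$ is one-dimensional and the same $u$ threads through all levels automatically; the multiple-eigenvalue case would in addition need the consistent-choice argument from \cite{Xie_Nonconforming}.
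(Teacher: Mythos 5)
Your proposal is correct, and its skeleton is the same as the paper's: start from Proposition \ref{Prop:Eigen_Error_Estimate} on $V_{0h_1}(\Omega)$ (with $\rho_\Omega(h_1)\le\rho_\Omega(H)$ so that the hypotheses of Theorem \ref{Thm:Error_Estimate_One_Step_Correction} hold with $\varepsilon_{h_1}(\lambda)=\delta_{h_1}(\lambda)$), iterate the one-correction-step theorem, absorb the quadratic term via $\varepsilon_{h_k}(\lambda)\lesssim\rho_\Omega(H)$, and invoke $\delta_{h_k}(\lambda)\approx\beta\delta_{h_{k+1}}(\lambda)$ together with the smallness of $\beta\rho_\Omega(H)$. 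Where you genuinely deviate is in how the recursion is closed: the paper first simplifies to $\varepsilon_{h_{k+1}}(\lambda)\lesssim\rho_\Omega(H)\varepsilon_{h_k}(\lambda)+\delta_{h_{k+1}}(\lambda)$, unrolls it to $\varepsilon_{h_n}(\lambda)\lesssim\sum_{k=1}^n(\rho_\Omega(H))^{n-k}\delta_{h_k}(\lambda)$, rewrites $\delta_{h_k}(\lambda)\approx\beta^{n-k}\delta_{h_n}(\lambda)$ and sums the geometric series $\sum_k(\beta\rho_\Omega(H))^{n-k}\lesssim(1-\beta\rho_\Omega(H))^{-1}$, whereas you prove the uniform bound $\eta_{h_k}\le C_0\,\delta_{h_k}(\lambda)$ by a closed induction with $C_0$ fixed in advance. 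The two resolutions are equivalent in outcome, but your bookkeeping is the more careful one: the paper's chain of "$\lesssim$" hides the fact that the constants compound when the one-step estimate is iterated $n$ times (which is precisely why the hypothesis must be stated as $C\beta\rho_\Omega(H)<1$ ``for some constant $C$'' rather than $\beta\rho_\Omega(H)<1$), while your fixed-$C_0$ induction, with the quadratic term handled through $\delta_{h_k}^2(\lambda)\lesssim\beta\rho_\Omega(H)\delta_{h_{k+1}}(\lambda)$, makes that constant dependence explicit and avoids any circularity; the paper's geometric-series form, on the other hand, displays more transparently how the accuracy $\delta_{h_n}(\lambda)$ of the finest level dominates the accumulated history. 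Your final remark that all three estimates (\ref{Multi_Correction_Err_fun1})--(\ref{Multi_Correction_Err_eigen}) come out of the same inductive statement matches the paper, which obtains the $\|\cdot\|_{0,\Omega}$ and eigenvalue bounds ``in the similar way'' from the last application of Theorem \ref{Thm:Error_Estimate_One_Step_Correction}.
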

\begin{proof}
Based on Proposition \ref{Prop:Eigen_Error_Estimate}, there exists an eigenfunction $u\in M(\lambda)$ such that
\begin{eqnarray}
|\lambda-\lambda_{h_1}| &\lesssim & \delta_{h_1}^2(\lambda),\label{Initial_Error_Eigenvalue}\\
\|u-u_{h_1}\|_{1,\Omega} &\lesssim& \delta_{h_1}(\lambda),\label{Initial_Error_Eigenfunc_1}\\
\|u-u_{h_1}\|_{0,\Omega}&\lesssim& \rho_{\Omega}(h_1)\delta_{h_1}(\lambda).\label{Initial_Error_Eigenfunc_0}
\end{eqnarray}
Let $\varepsilon_{h_1}(\lambda):=\delta_{h_1}(\lambda)$. From
(\ref{Initial_Error_Eigenvalue})-(\ref{Initial_Error_Eigenfunc_0}) and
Theorem \ref{Thm:Error_Estimate_One_Step_Correction}, we have
\begin{eqnarray*}
\varepsilon_{h_{k+1}}(\lambda)
&\lesssim&\rho_{\Omega}(H)\varepsilon_{h_k}(\lambda)
+\varepsilon_{h_k}^2(\lambda)+\delta_{h_{k+1}}(\lambda)\\
&\lesssim&\rho_{\Omega}(H)\varepsilon_{h_k}(\lambda)+\delta_{h_{k+1}}(\lambda),
\ \ \ \ {\rm for}\ 1\leq k\leq n-1.
\end{eqnarray*}
by a process of induction with the condition $\rho_{\Omega}(H)\gtrsim\delta_{h_1}(\lambda)\geq
\delta_{h_2}(\lambda)\geq\cdots\geq \delta_{h_n}(\lambda)$.
Then by recursive relation, we obtain
\begin{eqnarray}\label{varepsilon_h_n}
\varepsilon_{h_{n}}(\lambda)
&\lesssim&\rho_{\Omega}(H)\varepsilon_{h_{n-1}}(\lambda)+\delta_{h_{n}}(\lambda)\nonumber\\
&\lesssim&\rho^2_{\Omega}(H)\varepsilon_{h_{n-2}}(\lambda)+
\rho_{\Omega}(H)\delta_{h_{n-1}}(\lambda)+\delta_{h_{n}}(\lambda)\nonumber\\
&\lesssim&\sum\limits_{k=1}^{n}(\rho_{\Omega}(H))^{n-k}\delta_{h_k}(\lambda).
\end{eqnarray}
Based on the proof in Theorem \ref{Thm:Error_Estimate_One_Step_Correction}, (\ref{Error_k_k_1})
and (\ref{varepsilon_h_n}), the final eigenfunction approximation $u_{h_n}$ has the error estimate
\begin{eqnarray*}\label{Error_u_h_n_Multi_Correction}
\|u-u_{h_n}\|_{1,\Omega}&\lesssim&\varepsilon_{h_{n}}(\lambda)
\lesssim \sum_{k=1}^n(\rho_{\Omega}(H))^{n-k}\delta_{h_k}(\lambda)\\
&=&\sum_{k=1}^n\big(\beta\rho_{\Omega}(H)\big)^{n-k}\delta_{h_n}(\lambda)
\lesssim \frac{\delta_{h_n}(\lambda)}{1-\beta\rho_{\Omega}(H)}\\
&\lesssim&\delta_{h_n}(\lambda).
\end{eqnarray*}
The desired result (\ref{Multi_Correction_Err_fun0}) and
(\ref{Multi_Correction_Err_eigen}) can also be proved with the similar way in the proof
of Theorem \ref{Thm:Error_Estimate_One_Step_Correction}.
\end{proof}

\section{Work estimate of algorithm}
In this section, we turn our attention to the estimate of computational work
for Algorithm \ref{Algm:Multi_Correction}. We will show that
Algorithm \ref{Algm:Multi_Correction} makes solving eigenvalue problem need almost the
same work as solving the boundary value problem by the local and parallel finite element method.

First, we define the dimension of each level linear
finite element space as
\begin{equation*}
N_k^j:={\rm dim}V_{0h_k}(\Omega_j)\text{ and }N_k:={\rm dim}V_{0h_k}(\Omega),\ \
k=1,\cdots,n,\ j=1,\cdots,m+1.
\end{equation*}
Then we have
\begin{equation}\label{relation_dimension}
N_k^j \thickapprox\Big(\frac{1}{\beta}\Big)^{d(n-k)}N_n^j\ \ {\rm and}\ \
N_k^j\approx \frac{N_k}{m},\ \ \ k=1,\cdots, n.
\end{equation}
\begin{theorem}
Assume the eigenvalue problem solving in the coarsest spaces $V_{0H}(\Omega)$ and $V_{0h_1}(\Omega)$ need work
$\mathcal{O}(M_H)$ and $\mathcal{O}(M_{h_1})$, respectively,
and the work of solving the boundary value problem in $V_{h_k}(\Omega_j)$ and $V_{h_k}(G_{m+1})$ be
$\mathcal{O}(N_k^j)$ and $\mathcal{O}(N_k^{m+1})$,
$\forall k=1,2,\cdots,n \text{ and } j = 1,2,\cdots,m$.
Then the work involved in Algorithm \ref{Algm:Multi_Correction} is
 $\mathcal{O}(N_n/m+M_H\log N_n+M_{h_1})$ for each processor.
Furthermore, the complexity in each processor
will be $\mathcal{O}(N_n/m)$ provided $M_H\ll N_n/m$ and $M_{h_1}\leq N_n/m$.
\end{theorem}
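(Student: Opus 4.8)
The plan is to bound the cost of Algorithm \ref{Algm:Multi_Correction} by adding the cost of its three stages. Step 1 is the initial eigenvalue solve in $V_{0h_1}(\Omega)$, which by hypothesis costs $\mathcal{O}(M_{h_1})$. Step 3 is a loop of $n-1$ correction steps, each of which (by the decomposition of Algorithm \ref{Algm:One_Step_Correction}) consists of: (1a) solving $m$ independent boundary value problems, one per subdomain $\Omega_j$, in parallel; (1b) solving one more boundary value problem on $G_{m+1}$; and (1c) assembling and solving the small eigenvalue problem on $V_{H,h_{k+1}} = V_{0H}(\Omega)+\mathrm{span}\{\widetilde u_{h_{k+1}}\}$, whose dimension is $\dim V_{0H}(\Omega)+1$, hence costs $\mathcal{O}(M_H)$. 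So the plan is to sum the per-processor work of these pieces over $k=1,\dots,n-1$ and show the geometric series telescopes to the claimed bound.

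For stage (1a)–(1b) I would argue as follows. On the $k$-th correction step we work in the space indexed by $h_{k+1}$, so the processor handling subdomain $\Omega_j$ does $\mathcal{O}(N_{k+1}^j)$ work. Summing over $k=1,\dots,n-1$ and using the refinement relation \eqref{relation_dimension}, namely $N_{k+1}^j\approx(1/\beta)^{d(n-k-1)}N_n^j$, gives a geometric series in the ratio $\beta^{-d}<1$ that sums to $\mathcal{O}(N_n^j)=\mathcal{O}(N_n/m)$ per processor; the $G_{m+1}$ solve is handled by one processor and contributes the same order $\mathcal{O}(N_n^{m+1})=\mathcal{O}(N_n/m)$. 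For stage (1c), each of the $n-1$ iterations costs $\mathcal{O}(M_H)$, giving $\mathcal{O}((n-1)M_H)$. It remains to relate $n$ to $N_n$: from $h_k\approx\beta^{-1}h_{k-1}$ we get $h_n\approx\beta^{-(n-1)}h_1$, so $N_n\approx\beta^{d(n-1)}N_1$ and hence $n\approx\log_\beta N_n$, i.e. $n=\mathcal{O}(\log N_n)$. Therefore stage (1c) over the whole loop is $\mathcal{O}(M_H\log N_n)$. Adding Step 1 gives the total $\mathcal{O}(N_n/m+M_H\log N_n+M_{h_1})$ per processor.

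For the final sentence, I would observe that under $M_H\ll N_n/m$ the middle term $M_H\log N_n$ is dominated by $N_n/m$ once $\log N_n$ grows slower than $(N_n/m)/M_H$, which holds asymptotically; more precisely, if $M_H\log N_n\le N_n/m$ (a mild restatement of $M_H\ll N_n/m$, absorbing the logarithmic factor into the ``$\ll$''), and if $M_{h_1}\le N_n/m$, then all three terms are $\mathcal{O}(N_n/m)$, giving the claimed optimal per-processor complexity.

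I expect the only genuinely delicate point to be the bookkeeping that turns ``number of correction steps'' into a logarithm of the final dimension and the verification that the parallel work really is distributed so that each processor only sees $\mathcal{O}(N_n/m)$ rather than $\mathcal{O}(N_n)$; the geometric-series summations themselves are routine. One should also be slightly careful that the space $V_{H,h_{k+1}}$ must be formed on a single processor (it couples the global coarse space with the locally-assembled $\widetilde u_{h_{k+1}}$), so the $\mathcal{O}(M_H)$-per-step cost is incurred on that processor; since this is additive with the local solves and $M_H$ is small, it does not spoil the bound, but it is the step where the ``local and parallel'' structure is momentarily broken and therefore deserves explicit comment.
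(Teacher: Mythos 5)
Your proposal is correct and follows essentially the same route as the paper: bound the per-processor work of each correction step by $\mathcal{O}(N_k/m+M_H)$, sum over levels using the dimension relation \eqref{relation_dimension} so the local-solve costs form a geometric series totaling $\mathcal{O}(N_n/m)$, and use $n=\mathcal{O}(\log N_n)$ to turn the repeated coarse eigenvalue solves into the $M_H\log N_n$ term. Your extra remarks (making the $G_{m+1}$ solve and the single-processor coarse solve explicit, and spelling out $n\approx\log_\beta N_n$) only make explicit what the paper leaves implicit.
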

\begin{proof}
Let $W_k$ denote the work in any processor of the one correction step in
the $k$-th finite element space $V_{h_k}$. Then with the definition, we have
\begin{eqnarray}\label{work_k}
W_k&=&\mathcal{O}(N_k/m+M_H)\ \ \ \ {\rm for}\ k\geq 2.
\end{eqnarray}
Iterating (\ref{work_k}) and using the fact (\ref{relation_dimension}), we obtain
\begin{eqnarray}\label{Work_Estimate}
&&\text{The total work in any processor}\leq\sum_{k=1}^nW_k\nonumber\\
&=& \mathcal{O}\Big(M_{h_1}+\sum_{k=2}^n\big(N_k/m+M_H\big)\Big)\nonumber\\
&=&\mathcal{O}\Big(\sum_{k=2}^nN_k/m+(n-2)M_H+M_{h_1}\Big)\nonumber\\
&=&\mathcal{O}\Big(\sum_{k=2}^n\big(\frac{1}{\beta}\big)^{d(n-k)}N_n/m+(n-2)M_H+M_{h_1}\Big)\nonumber\\
&=&\mathcal{O}(N_n/m+M_H\log N_n+M_{h_1}).
\end{eqnarray}
This is the desired result $\mathcal{O}(N_n/m+M_H\log N_n+M_{h_1})$ and the
one $\mathcal{O}(N_n/m)$  can be obtained by the conditions $M_H\ll N_n/m$ and $M_{h_1}\leq N_n/m$.
\end{proof}
\begin{remark}
The linear complexity $\mathcal{O}(N_k^j)$ and $\mathcal{O}(N_k^{m+1})$
can be arrived by the so-called multigrid method (see, e.g., \cite{Bramble,BrambleZhang,Hackbush,McCormick,Xu}).
\end{remark}

\section{Numerical result}\label{Numerical_Result_Section}
In this section, we give two numerical examples to illustrate the
efficiency of the multilevel correction algorithm {(Algorithm \ref{Algm:Multi_Correction})}
proposed in this paper.

\begin{example}\label{Example_1}
In this example, the eigenvalue problem (\ref{Weak_Eigenvalue_Problem}) is solved on the square  $\Omega=(-1,1)\times(-1,1)$ with $a(u,v) = \int_{\Omega}\nabla u\cdot\nabla v\mathrm{d}\Omega$
and ${b(u,v)} = \int_{\Omega}u v \mathrm{d}\Omega$.
\end{example}
As {in} Figure \ref{fig:ComputeDomain}, we first divide the domain $\Omega$ into
{ four disjoint subdomains} $D_1$, $\cdots$, $D_4$ such that
$\bigcup_{j=1}^4\bar{D}_j=\bar{\Omega}$, $D_i\cap D_j=\emptyset$, {
then enlarge each $D_j$ to obtain $\Omega_j$ such that
 $G_j\subset\subset D_j \subset\Omega_j\subset \Omega$ for $i,j=1,2,3,4$} and
\begin{equation*}
G_{5} = \Omega\setminus (\cup_{j=1}^4\bar{G_j}).
\end{equation*}
\begin{figure}[htb]
\centering\includegraphics[width=8cm,height=4cm]{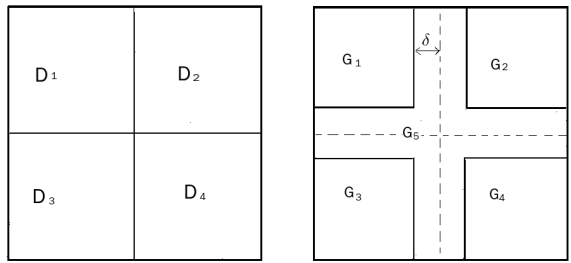}
\caption{$\bigcup_{j=1}^4\bar{D}_j=\bar{\Omega}$, $G_{5} = \Omega\setminus (\cup_{j=1}^4\bar{G_j})$ }
\label{fig:ComputeDomain}
\end{figure}
The sequence of finite element spaces is constructed by
using the linear or quadratic element on the nested sequence of triangulations which are produced by the
regular refinement with $\beta =2$ (connecting the midpoints of each edge).

Algorithm \ref{Algm:Multi_Correction} is applied to solve the eigenvalue problem. If the linear element is used,
from Theorem \ref{Thm:Multi_Correction}, we have the following error estimates for eigenpair approximation
\begin{eqnarray*}
|\lambda_{h_n}-\lambda| \lesssim h_n^2,\ \ \|u_{h_n}-u\|_{1,\Omega} \lesssim h_n
\end{eqnarray*}
which means the multilevel correction method can also obtain the optimal convergence order.

The numerical results for the first five eigenvalues and the $1$-st, $4$-th eigenfunctions (they are simple)
{by the linear finite element method with five levels grids are shown in
 Tables \ref{tab:num_res_eva} and \ref{tab:num_res_eve}.}
  It is observed from Tables \ref{tab:num_res_eva} and \ref{tab:num_res_eve} that the
numerical results confirm the efficiency of the proposed algorithm.
\begin{table}[]
\centering
\caption{The errors for the first $5$ eigenvalue approximations}
\label{tab:num_res_eva}
\begin{tabular}{c|ccccc}
\hline
Eigenvalues & $|\lambda-\lambda_{h_1}|$ & $|\lambda-\lambda_{h_2}|$ & $|\lambda-\lambda_{h_3}|$ & $|\lambda-\lambda_{h_4}|$ & $|\lambda-\lambda_{h_5}|$\\
\hline
1-st     & 0.073555 & 0.018534 & 0.004651 & 0.001164 & 0.000291 \\
Order & -- & 1.988649 & 1.994561 & 1.998450 & 2.000000     \\
\hline
2-nd     & 0.426525 & 0.106936 & 0.026747 & 0.006689 & 0.001673 \\
Order & -- & 1.995883 & 1.999299 & 1.999515 & 1.999353 \\
\hline
3-rd     & 0.426534 & 0.106939 & 0.026748 & 0.006689 & 0.001673 \\
Order & -- & 1.995873 & 1.999285 & 1.999569 & 1.999353 \\
\hline
4-th     & 1.078632 & 0.267624 & 0.066859 & 0.016717 & 0.004180\\
Order & -- & 2.010923 & 2.001014 & 1.999806 & 1.999741 \\
\hline
5-th     & 1.490468 & 0.385000 & 0.097106 & 0.024349 & 0.006093 \\
Order & -- & 1.952835 & 1.987226 & 1.995698 & 1.998638 \\
\hline
\end{tabular}
\end{table}

\begin{table}[]
\centering
\caption{The errors for the simple ($1$-st and $5$-th) eigenfunction approximations}\label{tab:num_res_eve}
\begin{tabular}{c|ccccc}
\hline
{\footnotesize Eigenfunctions} & {\footnotesize$\|u-u_{h_1}\|_{1,\Omega}$} & {\footnotesize$\|u-u_{h_2}\|_{1,\Omega}$} & {\footnotesize$\|u-u_{h_3}\|_{1,\Omega}$} &
{\footnotesize$\|u-u_{h_4}\|_{1,\Omega}$} & {\footnotesize$\|u-u_{h_5}\|_{1,\Omega}$}\\
\hline
1-st     & 0.269991 & 0.135956 & 0.068195 & 0.034119 & 0.017064 \\
Order & -- & 0.989771 & 0.995402 & 0.999091 & 0.999619 \\
\hline
4-th     & 1.025704 & 0.514925 & 0.259424 & 0.129254 & 0.064645 \\
Order & -- & 0.994180 & 0.989050 & 1.005103 & 0.999598 \\
\hline
\end{tabular}
\end{table}

Next we discuss the effectiveness of $\delta$ and the coarsest mesh size $H$ to
the numerical results by Algorithm \ref{Algm:Multi_Correction}. Figure \ref{fig:P1HD}
shows the errors for the different choices of $\delta$ and $H$ by the linear finite element method.
From Figure \ref{fig:P1HD}, we can find Algorithm \ref{Algm:Multi_Correction} can obtain the optimal
convergence order when $H\leq 0.25$ and $\delta\geq 0.1$ which are soft requirements.
\begin{figure}[htb]
\centering
\includegraphics[width=5cm,height=5cm]{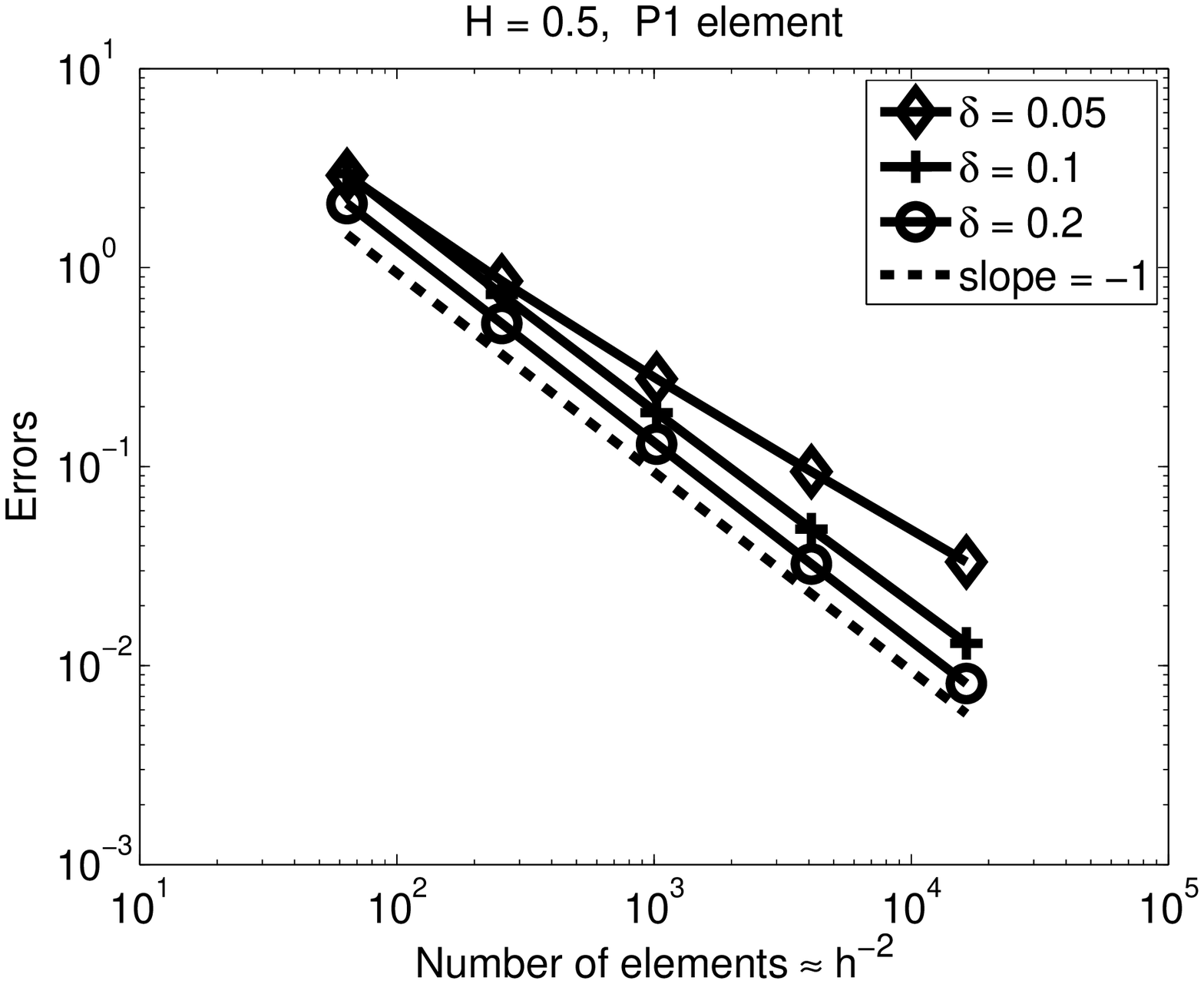}
\includegraphics[width=5cm,height=5cm]{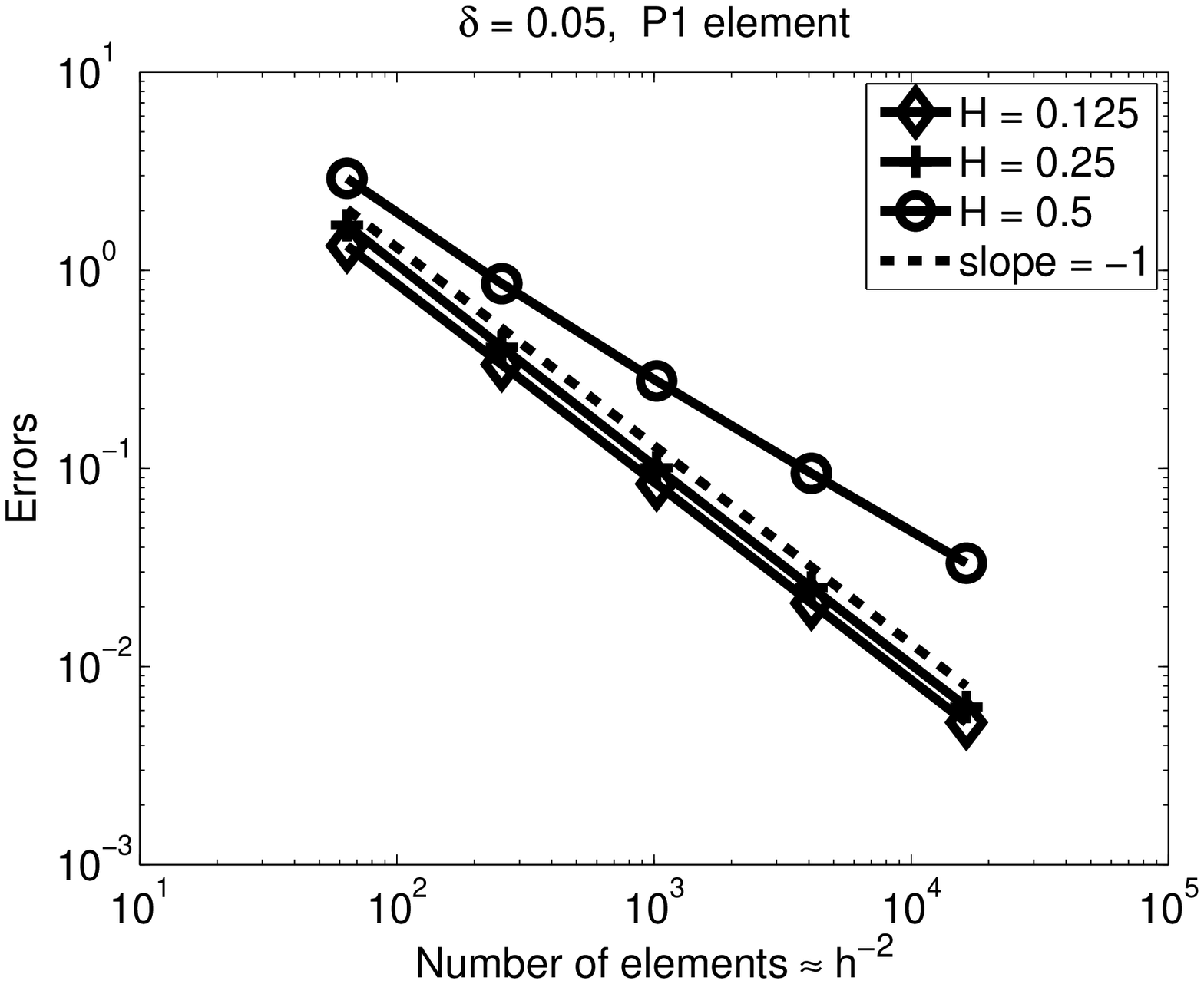}
\caption{The error estimate for the first $5$ eigenvalue approximations by the linear
element: The left subfigure is for $H=0.5$ and $\delta= 0.05$, $0.1$, $0.2$. The right
 subfigure is for  $\delta=0.05$ and $H=0.5$, $0.25$, $0.125$}\label{fig:P1HD}
\end{figure}
The case becomes better when we use the quadratic finite element method (see Figure \ref{fig:P2HD}).
For the quadratic finite element, the convergence order ($4$-th) is always optimal
even when $\delta$ is very small.
\begin{figure}[htb]
\centering
\includegraphics[width=5cm,height=5cm]{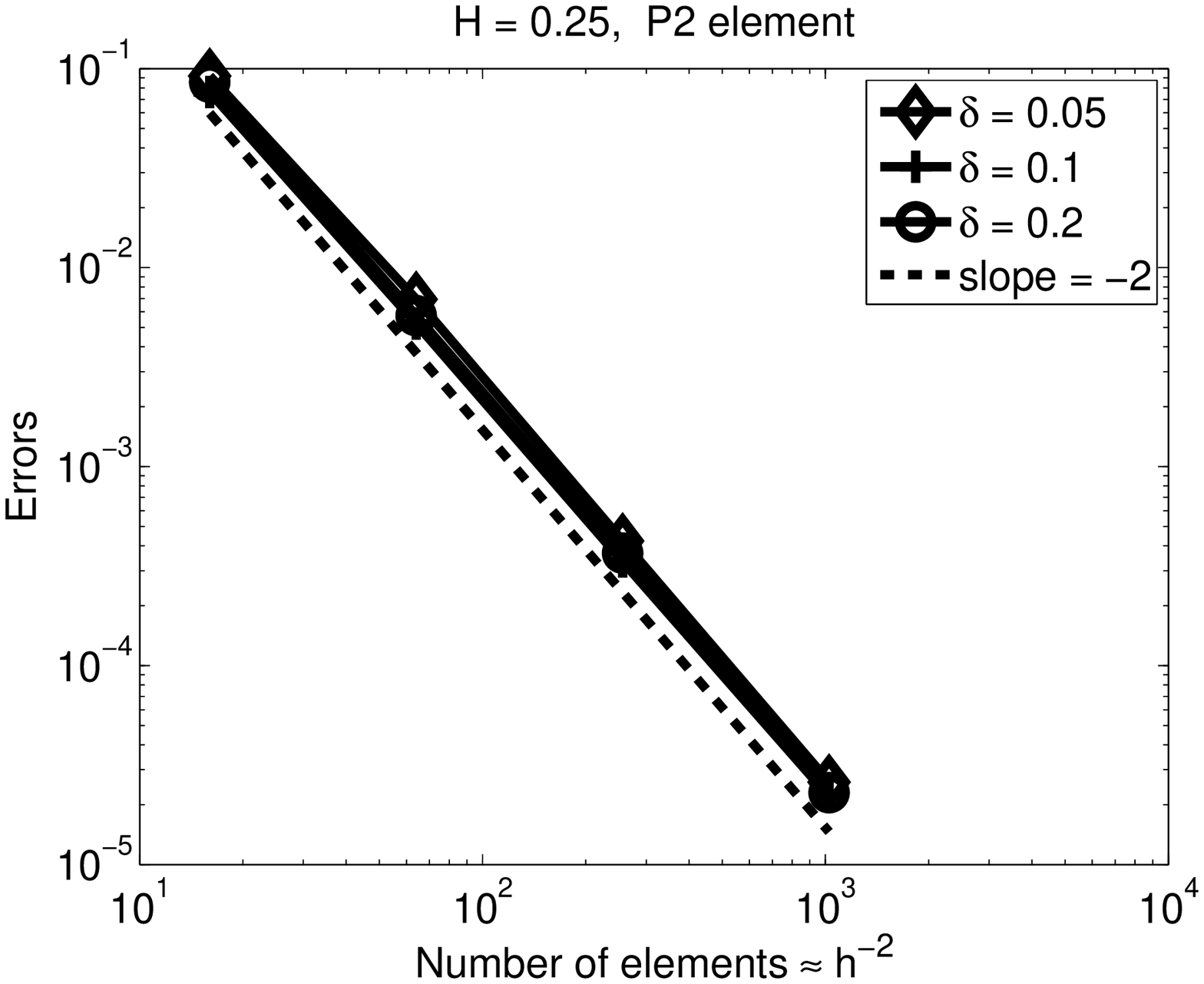}
\includegraphics[width=5cm,height=5cm]{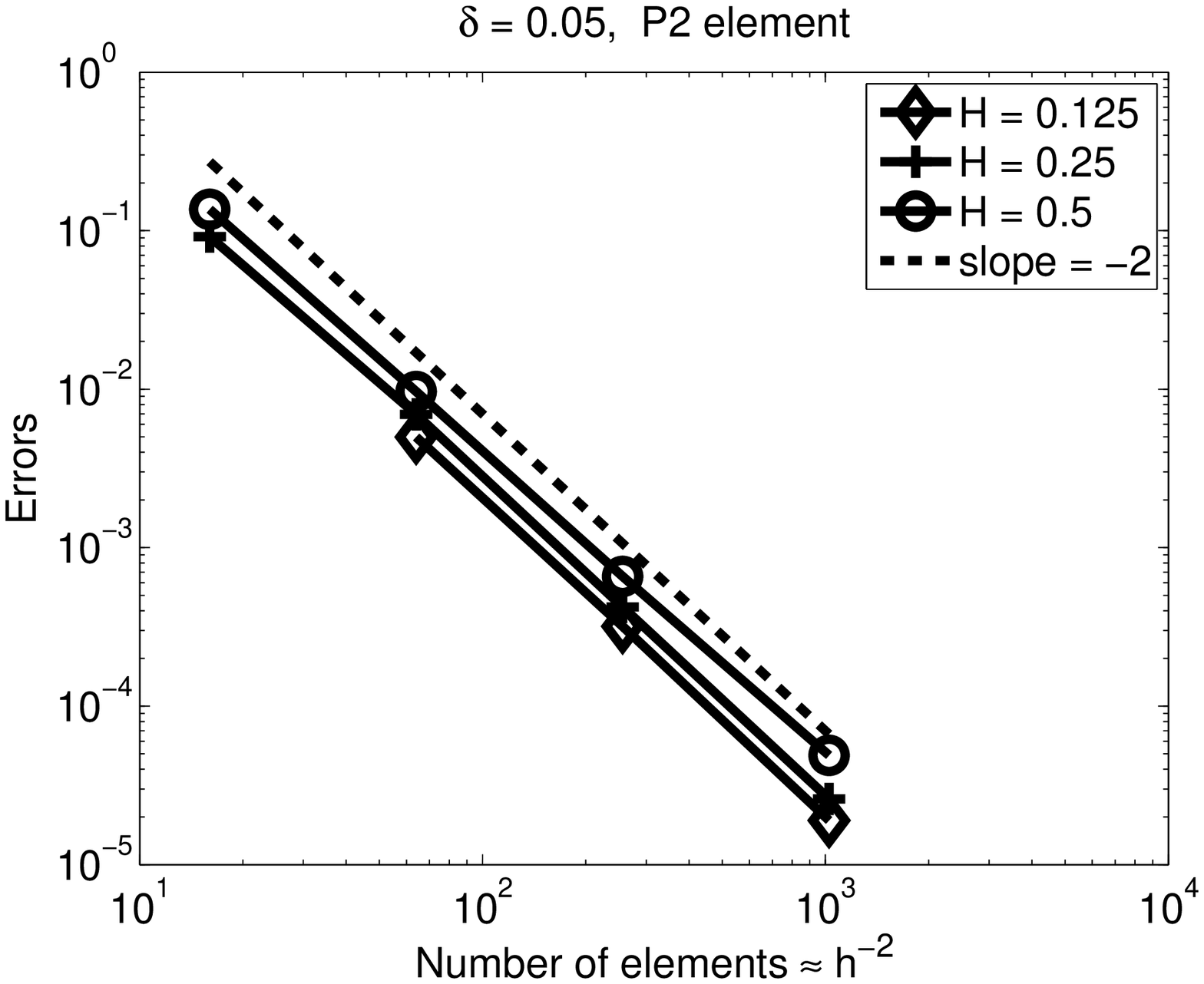}
\caption{The error estimate for the first $5$ eigenvalue approximations by the quadratic
element: The left subfigure is for $H=0.25$ and $\delta=0.05$, $0.1$, $0.2$. The right
 subfigure is for $\delta=0.05$ and $\delta=0.25$, $0.125$, $0.0625$}\label{fig:P2HD}
\end{figure}

\begin{example}\label{Example_2}
In the second example, we solve the eigenvalue problem (\ref{Weak_Eigenvalue_Problem})
using linear and quadratic element on the square $\Omega=(-1,1)\times(-1,1)$
with $a(u,v) = \int_{\Omega}A\nabla u\cdot\nabla v\mathrm{d}\Omega$,
 $b(u,v) = \int_{\Omega}\phi u v\mathrm{d}\Omega$  and
\begin{equation*}
A = \begin{pmatrix} e^{1+x^2} & e^{xy} \\  e^{xy} & e^{1+y^2} \end{pmatrix} \ \
\text{   and   } \ \  \phi = (1+x^2)(1+y^2).
\end{equation*}
\end{example}
Since the exact eigenvalue is not known, we use the accurate enough approximations
$[17.982932, 33.384973, 38.381968, 47.670103, 66.874113, 68.323961]$
by the extrapolation method as the first $6$ exact eigenvalues to investigate the errors.
\begin{figure}[htb]
\centering
\includegraphics[width=5cm,height=5cm]{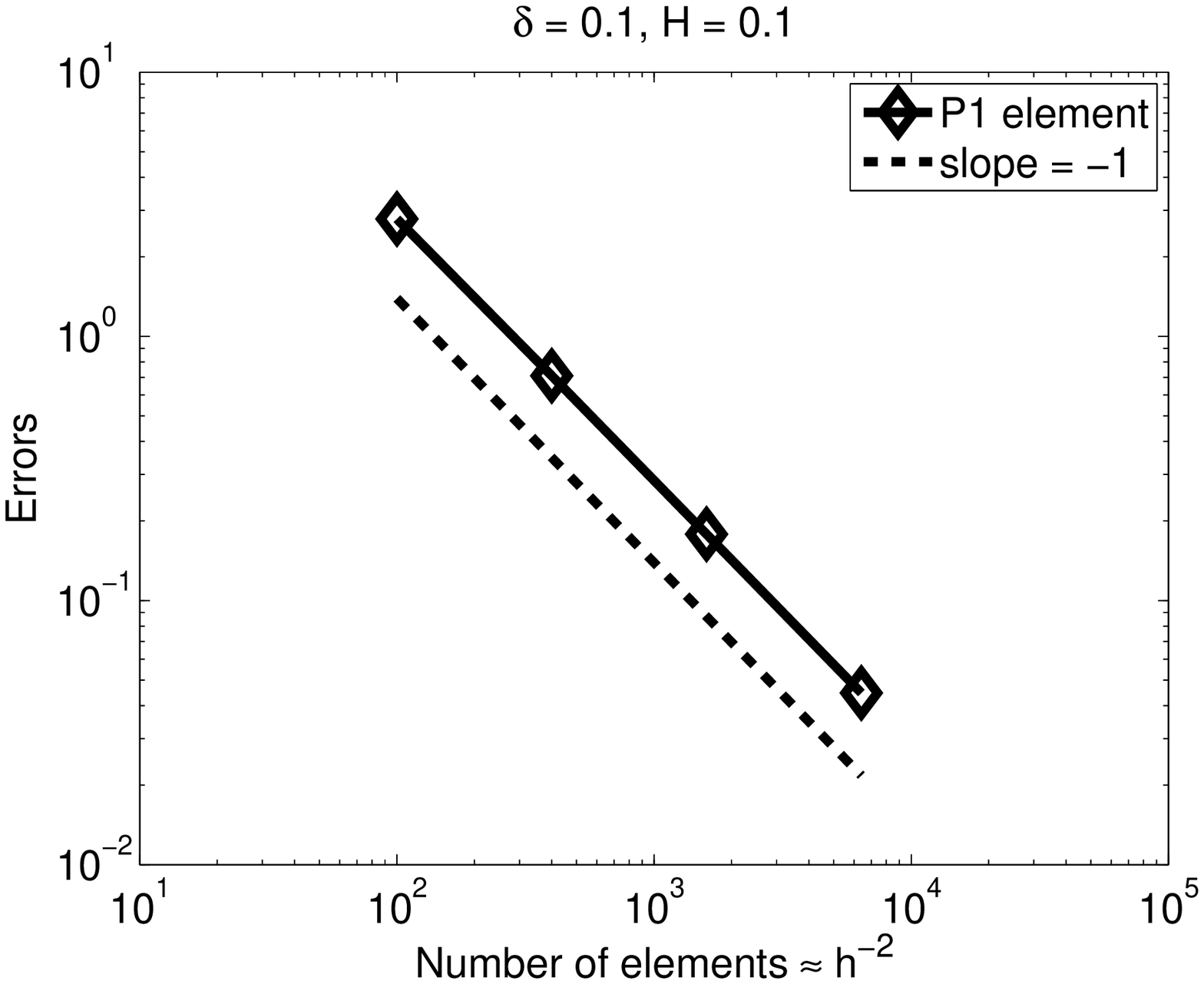}
\includegraphics[width=5cm,height=5cm]{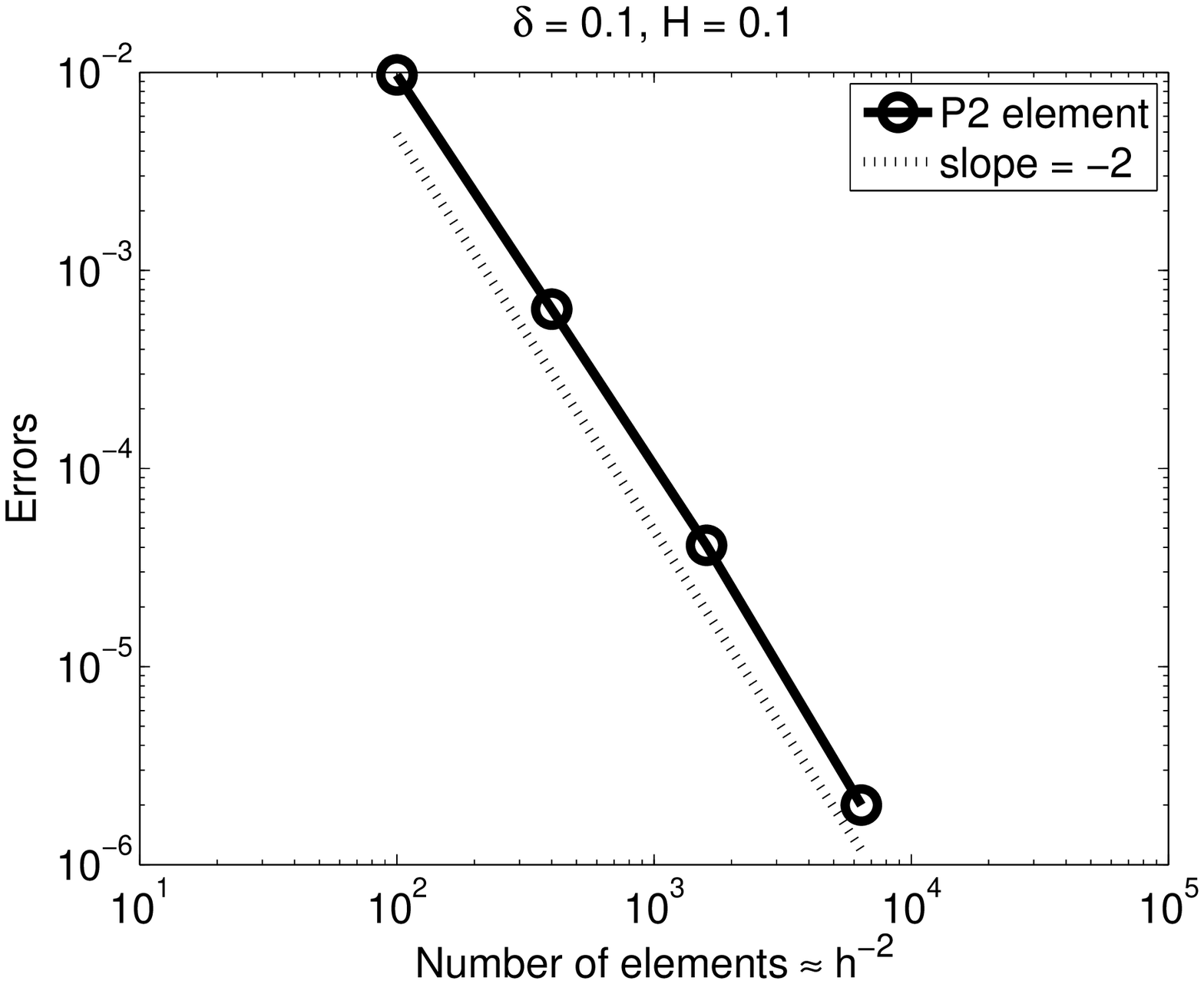}
\caption{The error estimate for the first $6$ eigenvalue approximations with
$H=0.1$ and $\delta=0.1$: The left subfigure is for linear element and the right
 subfigure is for quadratic element}\label{fig:P1P2}
\end{figure}
Figure \ref{fig:P1P2} shows the corresponding numerical results
for the first  $6$ eigenvalues by the linear and quadratic finite element methods, respectively.
Here, we use four level grids to do the numerical experiments. From Figure \ref{fig:P1P2}, the
numerical results also confirm the efficiency of the proposed algorithm in this paper.

\section{Concluding remarks}
In this paper, we give a new type of multilevel local and parallel method based on multigrid discretization
to solve the eigenvalue problems. The idea here is to use the multilevel correction method
to transform the solution of eigenvalue problem to a series of solutions of the corresponding boundary value
 problems with the local and parallel method. As stated in the numerical examples, Algorithm
 \ref{Algm:Multi_Correction} for simple eigenvalue cases can be extended to the corresponding version
for multiple eigenvalue cases. For more information, please refer \cite{Xie_Nonconforming}.

Furthermore, the framework here can also be coupled with the adaptive refinement technique.
The ideas can be extended to other types of linear and nonlinear eigenvalue problems.
These will be investigated in  our future work.

\end{document}